\documentclass[reqno]{amsart}
\usepackage{tabu}
\usepackage{amssymb}
\usepackage{mathtools}
\usepackage{a4wide,amsmath}
\usepackage{mathrsfs}
\usepackage{amsthm}
\numberwithin{equation}{section}
\numberwithin{figure}{section}
\numberwithin{table}{section}
\usepackage{bbm}
\usepackage{subfig}
\usepackage{needspace}
\usepackage{hyphenat}
\usepackage{comment}
\usepackage{enumerate}
\usepackage[section]{placeins}
\usepackage{graphicx}		  % Tilføjelse af billedfiler
\usepackage{ifpdf}
\ifpdf
\DeclareGraphicsExtensions{.pdf,.eps,.jpg,.png}	
\usepackage[suffix=]{epstopdf}
\fi
\usepackage{xcolor}
\usepackage[utf8]{inputenc}
\usepackage{hyperref}
\hypersetup{hidelinks}

\long\def\MSC#1\EndMSC{\def\arg{#1}\ifx\arg\empty\relax\else
	{\narrower\noindent%
		{2020 Mathematics Subject Classification}: #1\\} \fi}
\long\def\PACS#1\EndPACS{\def\arg{#1}\ifx\arg\empty\relax\else
	{\narrower\noindent%
		{PACS numbers}: #1}\fi}
\long\def\KEY#1\EndKEY{\def\arg{#1}\ifx\arg\empty\relax\else
	{\narrower\noindent% 
		Keywords: #1\\}\fi}

\newcommand{\norm}[1]{\lVert#1\rVert}  
\newcommand{\abs}[1]{\lvert#1\rvert} 
\newcommand{\inner}[1]{\langle#1\rangle} 
\newcommand{\di}{\mathrm{d}}  
\newcommand{\DLambda}{\textup{D}\mkern-1.5mu \Lambda}
\newcommand{\DhLambda}{\textup{D}\mkern-1.5mu \widehat{\Lambda}}
\newcommand{\D}{\mathcal{D}}

\newcommand{\R}{\mathbb{R}}
\newcommand{\N}{\mathbb{N}}
\newcommand{\C}{\mathbf{C}}

\newcommand{\p}{\partial}

\newcommand\Tr{\operatorname{tr}} % Trace
\newcommand\supp{\operatorname{supp}} 
\newcommand\Span{\operatorname{span}}

\theoremstyle{definition}

\theoremstyle{plain}
\newtheorem{thm}{Theorem}[section]
\newtheorem{lem}[thm]{Lemma}
\newtheorem{prop}[thm]{Proposition}

\theoremstyle{definition}
\newtheorem{defn}[thm]{Definition}
\newtheorem{assumption}[thm]{Assumption}
\theoremstyle{remark}
\newtheorem{rem}[thm]{Remark}

\numberwithin{equation}{section}

\begin{document}

\title[Reconstruction of shear modulus inclusions]{Reconstruction of shear modulus inclusions in elastostatics via divergence-free localization}
\date{}

\author[H.~Garde]{Henrik Garde}
\address[H.~Garde]{Department of Mathematics, Aarhus University, Aarhus, Denmark.}
\email{garde@math.au.dk}

\author[N.~Hyv\"onen]{Nuutti Hyv\"onen}
\address[N.~Hyv\"onen]{Department of Mathematics and Systems Analysis, Aalto University, Helsinki, Finland.}
\email{nuutti.hyvonen@aalto.fi}

\author[V.~Pohjola]{Valter Pohjola}
\address[V.~Pohjola]{Applied and Computational Mathematics, University of Oulu, Oulu, Finland}
\email{valter.pohjola@gmail.com}

\begin{abstract}
	We formulate an improved version of the monotonicity method for shape reconstruction in the inverse problem of linear elastostatics. We show that this method can reconstruct inclusions in the second Lam\'e parameter,~i.e.,~the shear modulus, independently of inhomogeneities in the first Lam\'e parameter. This improves on earlier methods that only recover inclusions in the bulk modulus under certain assumptions on the local interplay between the Lam\'e parameters.  Our proofs are based on linearization and divergence-free localization of energy using harmonic vector fields.
\end{abstract}

\maketitle
\tableofcontents

\section{Introduction} 

We consider the inverse boundary value problem in linear elastostatics: determine the Lam\'e parameters $\lambda$ and $\mu$ of an elastic body $\Omega \subset \R^3$, from boundary measurements of tractions and the corresponding displacements; see,~e.g.,~\cite{Barbone04, Beretta14a, Beretta14b, Carstea18, Eskin02, Ikehata90, Ikehata06, Ikehata99, Imanuvilov11,Lin17,Nakamura99,Nakamura93,Nakamura94,Nakamura94erratum,Nakamura95} for previous theoretical considerations on this problem. In this work, we focus on inclusion detection,~i.e.,~reconstructing the regions where $\lambda$ and $\mu$ deviate from prescribed background values, by means of the \emph{monotonicity method}~\cite{Tamburrino02,Harrach2008,HU13,Garde2020}. Our aim is to detect inclusions in the second Lam\'e parameter,~i.e.,~the shear modulus~$\mu$, independently of the first Lam\'e parameter $\lambda$. 

The previous works on the monotonicity method in the framework of linear elastostatics have only considered the detection of inclusions where the Lam\'e parameters satisfy $(\mu-\mu_0)(\lambda-\lambda_0) \geq 0$ in comparison to known  background parameters $\lambda_0$ and $\mu_0$ (see, in particular, \cite{EGH25,EH21}). This corresponds to finding inclusions in the bulk modulus $\lambda + \tfrac{2}{3}\mu$ under further restrictions on the interplay of the Lam\'e parameters. Combined with the earlier results, our work allows a partial separation of inclusions in the shear and bulk moduli; see Figure~\ref{fig_intro}. The shear and bulk moduli have natural physical interpretations: the former measures a material's resistance to shape change when subjected to a force applied parallel to its surface and the latter measures the resistance to uniform compression (see,~e.g.,~\cite{OSY92}). 

\begin{figure}[htb]
	\centering
	\includegraphics[width=0.9\textwidth]{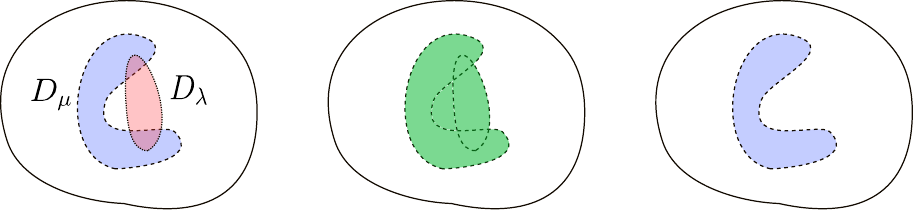}
	\caption{Left: The true structure of the object, with perturbations in the Lamé parameters in the regions $D_\lambda$ and $D_\mu$. For simplicity, assume $\lambda -\lambda_0>0$ in $D_\lambda$  and $\mu-\mu_0>0$ in $D_\mu$, which are sufficient, but not necessary, conditions for the functionality of the monotonicity method. Middle: Previous methods give a reconstruction of the outer shape of the inclusions in the bulk modulus. Right: The improved method presented here separates the inclusions in the shear modulus.}\label{fig_intro}
\end{figure}

The monotonicity method was introduced to the framework of linear elastostatics in~\cite{EH21}, where the so-called \emph{inner approach} was used to determine if a given open set is fully contained in an inclusion that is characterized by either increase or decrease in {\em both} Lam\'e parameters. We call an inclusion with and an increase (decrease) in a Lamé parameter from a background value,  a positive (negative) inclusion. In particular, the results on inclusion detection in \cite{EH21} only allow inclusions with a same sign inside $\Omega$. By resorting to the \emph{outer approach} of the monotonicity method, \cite{EGH25} provided a characterization of inclusions composed of both positive and negative parts, allowing even perfectly elastic or infinitely stiff inclusions. However, \cite{EGH25} also
only consideres inclusions that are locally positive or negative, that is, the perturbations in $\mu$ and $\lambda$ are required to have the same sign locally, meaning that the expression $(\mu-\mu_0)(\lambda-\lambda_0)$ is assumed to be non-negative throughout the domain. 

In this work, we decouple $\mu$ from $\lambda$ so that we reconstruct inclusions in $\mu$ without possible perturbations in  $\lambda$ having any effect on this process. Indeed, $\lambda$ can be completely arbitrary, although we do require the knowledge of lower and upper bounds on $\lambda$. The outer approach (Theorem~\ref{thm:outer}) enables reconstructing inclusions in $\mu$ for which the perturbations $\mu-\mu_0$ can have both positive and negative parts. We also consider the more specialized inner approach (Propositions~\ref{prop:posinc} and~\ref{prop:neginc}) that characterizes positive (or negative) inclusions in $\mu$. 

Choosing suitable function spaces is crucial for our analysis, and we thus formulate the forward problem of linear elastostatics in a quotient space $H^1(\Omega)^3/\mathcal{R}$ (see, e.g.,~\cite{Ciarlet78, Kaleem26}), where $\mathcal{R}$ denotes the finite-dimensional space of rigid motions, or more precisely in the space $\mathcal{H}(\Omega)$  (Definition~\ref{eq_def_H}) of the representatives for the
equivalence classes in $H^1(\Omega)^3/\mathcal{R}$ with centered displacements~\cite[section 5]{Kaleem26}. This functional-analytic setting replaces the auxiliary Dirichlet boundary conditions often imposed in elastostatics, and it is particularly well suited for our localized fields. Enforcing additional Dirichlet conditions would be incompatible with the divergence-free constructions that drive our monotonicity arguments; see Remarks~\ref{rem_bc} and  \ref{rem_dirichlet}.

While the reconstruction in $\mu$, separately from $\lambda$, gives a generalization of previous results in terms of not requiring $(\mu-\mu_0)(\lambda-\lambda_0)$ to be non-negative, but with $\lambda$ allowed to be arbitrary, the main downside is that we require the background shear modulus $\mu_0$ to be a constant function. This enables the construction of sufficiently rich families of localized divergence-free solutions (Theorem~\ref{thm_loc}), which are the key to disentangling the effects of $\mu$ from those by $\lambda$ in the linearized monotonicity inequalities. An analogous result was proven in \cite{EP26} for the case of time-harmonic elastic waves. The time-harmonic case differs significantly from its elastostatic counterpart and the ideas from \cite{EP26} cannot be directly imported to our setting. For more information on the time harmonic setting, see~\cite{EP26,EP23,HPS19b,HPS19a}.

Our localized probing fields for detecting perturbations only in $\mu$ are harmonic vector fields, that is, solutions $u$ to the forward problem of linear elastostatics with $\nabla \times u = 0$ and $\nabla \cdot u = 0$, realized as gradients $u = \nabla \varphi$ of harmonic potentials $\varphi$. These potentials are required to satisfy certain boundary conditions, ensuring that $u$ belongs to $\mathcal{H}(\Omega)$. To achieve this, we devise a constrained Runge approximation scheme tailored to elastostatics in $\mathcal{H}(\Omega)$, enforcing the required compatibility with rigid motions. In the present setting, the constrained approximation is employed not to avoid resonance phenomena as in~\cite{EP23,EP26,HPS19b}, but to produce localized, divergence-free test fields that respect the quotient-space formulation. They can be used directly in the linearized monotonicity inequalities to decouple perturbations in $\mu$ from those in $\lambda$.

The article is organized as follows. Section~\ref{sec:preliminaries} considers the employed notation and other preliminaries needed for introducing our mathematical setting. The pure traction forward problem of linear elastostatics is described in Section~\ref{sec_direct}. Our main results are given in Section~\ref{sec:main}, which includes the localization result for divergence-free solutions in Theorem~\ref{thm_loc}, and the reconstruction methods for the outer approach in Theorem~\ref{thm:outer} and the inner approach in Propositions~\ref{prop:posinc} and~\ref{prop:neginc}. Section~\ref{sec:divergence_free} is devoted to the proof of the localization result, while the proofs of the reconstruction methods are given in Sections~\ref{sec:proofthmouter}--\ref{sec:proofneginc}.  

\section{Preliminaries}
\label{sec:preliminaries}

Recall that the Frobenius inner product of matrices is defined as
\begin{equation} \label{eq:inner}
	A:B = \sum_{i,j} A_{ij}B_{ij}, \quad A,B \in \R^{m\times n},
\end{equation}
and denote the induced norm by
\begin{equation} \label{eq:norm}
	\abs{A} = (A:A)^{1/2}, \quad A \in \R^{m \times n}.
\end{equation}
The definitions \eqref{eq:inner}--\eqref{eq:norm} are consistent with the Euclidean inner product and norm for vectors as well as the absolute value for scalars. For vectors we will also use the usual dot product notation for the inner product.

A rigid motion in $\Omega$ is defined as
\begin{equation*}
	A(x) = \omega \times x + \eta, \quad x \in \Omega,
\end{equation*}
where $\eta \in \R^3$ is the base velocity, and the axis of rotation and angular velocity are, respectively, defined by the direction and magnitude of $\omega \in \R^3$. In particular, the space of all rigid motions can be defined as
\begin{equation} \label{eq_def_R}
	\mathcal{R} = \{\, A\colon \Omega \to \R^3  :   A(x) = B x + \eta \text{ for } \eta \in \R^3 \text{ and antisymmetric } B \in \R^{3 \times 3} \,\}
\end{equation}
since there is a one-to-one correspondence between linear maps of the form $x \mapsto \omega \times x$ and antisymmetric matrices.

Let
\begin{equation*}
	L^\infty_+(\Omega) = \{\, \varsigma \in L^\infty(\Omega) : \inf \varsigma > 0 \,\}.
\end{equation*}
We have
\begin{equation*}
	\inner{u,v}_{L^2(\Omega)^{m \times n}} = \int_\Omega u : v\, {\rm d} x, \quad u,v \in L^2(\Omega)^{m \times n},
\end{equation*}
where $n,m \in \N$. We reserve the notation
\begin{equation*}
	\inner{g, f} = \int_{\partial \Omega} g \cdot f \, {\rm d} S 
\end{equation*}
for the inner product of $L^2(\partial \Omega)^3$.

\section{Pure traction problem of linear elasticity} \label{sec_direct}

A deformation of an isotropic elastic body $\Omega \subset \R^3$ can  be described by the displacement field $u \colon \Omega \to \R^3$ that indicates how points in $\Omega$ are translated. We assume that $\Omega$ has a $C^{1,1}$-boundary and consider the pure traction problem (see,~e.g.,~\cite{Ci88})
\begin{align*}
	\nabla \cdot (\C\widehat{\nabla} u )  &= 0  \text{ in } \Omega \\
	\gamma_{\C} u &= g \text{ on }\partial\Omega,
\end{align*}
where $\widehat{\nabla} u  = \frac{1}{2}(\nabla u + \nabla u^\top)$ is the symmetric derivative or the strain tensor, $g$ is the applied boundary traction, $\C$ is the fourth order stiffness/elastic tensor, 
and\footnote{
	Note that this definition of $\gamma_{\mathbf C}$ is only valid for regular $u$, because of the trace operator. We can however extend the definition to the weak solutions given in \eqref{eq_weak}. For a weak solution $u \in H^1(\Omega)^3$, we define $\gamma_{\mathbf{C}} u \in L^2(\p \Omega)^3$ as the element in $L^2(\p\Omega)^3$ for which
	\begin{equation*} 
		\inner{\gamma_{\mathbf{C}} u, \, v|_{\p \Omega}} = B(u,v), \quad \forall v \in H^1(\Omega)^3. 
	\end{equation*}
}
\begin{equation} \label{eq_gamma}
	\gamma_{\mathbf C} u = \bigl((\C\widehat{\nabla} u ) \nu\bigr)|_{\p \Omega},
\end{equation}
with $\nu$ denoting the exterior unit normal of $\partial \Omega$.
As we consider isotropic materials, $\C$ can be given as 
\begin{equation*} 
	(\C A)_{ij} =  \lambda \Tr(A) \delta_{ij} + 2\mu A_{ij}, \quad A \in \R^{3 \times 3},
\end{equation*}
where $\delta_{ij}$ is Kronecker's delta and $\lambda,\mu \in L^\infty_+(\Omega)$ are the Lamé parameters. In particular, the differential operator reduces to the form 
\begin{equation*}
	\nabla \cdot (\C\widehat{\nabla} u ) = \nabla \cdot \big(  \lambda (\nabla \cdot u) I + 2 \mu \widehat{\nabla} u  \big).
\end{equation*}

Let $g \in L^2(\partial\Omega)^3$. We say that $u \in H^1(\Omega)^3$ is a weak solution if
\begin{equation} \label{eq_weak}
	B_{\lambda, \mu}(u,v) = \ell(v) 
\end{equation}
for all $v \in H^1(\Omega)^3$, where
\begin{align*}
	B_{\lambda, \mu}(u,v) &= \int_\Omega \lambda (\nabla \cdot u)(\nabla \cdot v) + 2 \mu \widehat{\nabla} u :\widehat{\nabla} v  \,\di x, \\[1mm]
	\ell(v) &= \int_{ \partial\Omega} g \cdot v \,{\rm d}S.
\end{align*}
As the space of rigid motions $\mathcal{R}$ from \eqref{eq_def_R} is a subspace of the nullspaces of both the symmetric derivative and the divergence,   this variational equation (i)~does not have a solution for all $g \in L^2(\partial\Omega)^3$ and, if a solution exists, (ii)~it is not unique. Indeed, as $B_{\lambda,\mu}(u,v)$ vanishes for all $v \in \mathcal{R}$, the weak problem cannot have a solution unless $g$ belongs to
\begin{equation*}
	\mathcal{R}^{\perp} = \{\, g\in L^2(\partial\Omega)^3 : \inner{g,r} = 0  \text{ for all } r\in \mathcal{R} \,\}.
\end{equation*}
On the other hand, if $u \in H^1(\Omega)^3$ is a weak solution, so is $u + w$ for any $w \in \mathcal{R}$. It turns out that these are the only obstructions to the unique solvability; see,~e.g.,~\cite{Ciarlet78,OSY92} for more general analysis on the solvability of the pure traction problem.

As in~\cite{Kaleem26} we search for weak solutions with $g \in \mathcal{R}^{\perp}$ from $H^1(\Omega)^3 / \mathcal R$. The weak problem is well-defined for $u \in H^1(\Omega)^3 / \mathcal R$, and one is allowed to consider the equivalence classes of $H^1(\Omega)^3 / \mathcal R$ as the test functions, since neither $B_{\lambda,\mu}$ nor $\ell$ is affected by additive changes of their arguments in the direction of $\mathcal{R}$. As will be apparent soon, we may extract a unique representative in the space of \emph{centered displacements}
\begin{equation} \label{eq_def_H}
	\mathcal{H}(\Omega) = \{\, u \in H^1(\Omega)^3  : \inner{u,r}_{L^2(\Omega)^3} = 0 \text{ for all } r\in \mathcal{R} \,\}.
\end{equation}
The second Korn inequality applies to $\mathcal H(\Omega)$, stating that
\begin{align}  \label{eq_korn2_2}
	\norm{u}_{H^1(\Omega)^3} \leq C_{\rm K} \norm{\widehat{\nabla}u}_{L^2(\Omega)^3}, \quad \forall u \in \mathcal{H}(\Omega),
\end{align}
with some constant $C_{\rm K} = C_{\rm K}(\Omega) > 0$. The second Korn inequality holds in fact for any  closed subspace $W \subset H^1(\Omega)^3$ for which  
\begin{equation*}
	W\cap\mathcal{R} = \{0\},
\end{equation*}
and hence for $\mathcal H(\Omega)$, since $\mathcal H(\Omega) \cap \mathcal R = \{ 0 \}$. For a proof of \eqref{eq_korn2_2}, see \cite[theorem 2.5]{OSY92}.

In consequence, $B_{\lambda, \mu}(u,v)$ is both coercive and continuous on $\mathcal{H}(\Omega)$:
\begin{align}  
	\abs{B_{\lambda, \mu}(u,u)} &\geq \frac{2 \inf \mu}{C_{\rm K}^2} \norm{u}_{H^1(\Omega)^3}^2, \label{eq:coer} \\
	\abs{B_{\lambda, \mu}(u,v)} &\leq ( 2 \sup\mu + 3 \sup \lambda ) \norm{u}_{H^1(\Omega)^3} \norm{v}_{H^1(\Omega)^3} \label{eq:cont}
\end{align}
for all $u, v \in \mathcal{H}(\Omega)$.

\begin{prop} \label{prop_wellposedness}
	There is a unique weak solution $[u] \in  H^1(\Omega)^3 / \mathcal{R}$ if and only if $g\in\mathcal{R}^{\perp}$. For $g\in\mathcal{R}^{\perp}$, the solution equivalence class $[u]$ has a unique representative $u\in\mathcal H(\Omega)$, and this $u$ minimizes the $L^2(\Omega)^3$-norm over elements in~$[u]$. Moreover, 
	\begin{equation}  \label{eq_aprioriEst}
		\norm{u}_{H^1(\Omega)^3} \leq C\norm{g}_{L^2(\partial\Omega)^3},
	\end{equation}
	where $C = C(\mu, \Omega) > 0$ is independent of $g$.
\end{prop}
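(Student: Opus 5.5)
The plan is to apply the Lax--Milgram lemma on the Hilbert space $\mathcal{H}(\Omega)$, which is a closed subspace of $H^1(\Omega)^3$: it is the intersection of kernels of the finitely many continuous functionals $u \mapsto \inner{u,r}_{L^2(\Omega)^3}$, $r$ ranging over a basis of $\mathcal{R}$. By \eqref{eq:coer} and \eqref{eq:cont}, $B_{\lambda,\mu}$ is coercive and bounded on $\mathcal{H}(\Omega)$. The functional $\ell$ is bounded on $H^1(\Omega)^3$ by the trace theorem, with $\abs{\ell(v)} \leq C_{\rm tr}\norm{g}_{L^2(\partial\Omega)^3}\norm{v}_{H^1(\Omega)^3}$. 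Lax--Milgram therefore gives a unique $u \in \mathcal{H}(\Omega)$ with $B_{\lambda,\mu}(u,v) = \ell(v)$ for all $v \in \mathcal{H}(\Omega)$. Testing with $v=u$ and using \eqref{eq:coer} yields
\[
\frac{2\inf\mu}{C_{\rm K}^2}\norm{u}_{H^1(\Omega)^3}^2 \leq C_{\rm tr}\norm{g}_{L^2(\partial\Omega)^3}\norm{u}_{H^1(\Omega)^3},
\]
which is \eqref{eq_aprioriEst} with $C = C_{\rm tr}C_{\rm K}^2/(2\inf\mu)$. This constant depends only on $\mu$ and $\Omega$.

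Next I pass from test functions in $\mathcal{H}(\Omega)$ to all of $H^1(\Omega)^3$, and this is where $g \in \mathcal{R}^\perp$ is used. Let $P$ be the $L^2(\Omega)^3$-orthogonal projection onto the finite-dimensional space $\mathcal{R}$; since $\mathcal{R} \subset H^1(\Omega)^3$, any $v \in H^1(\Omega)^3$ splits as $v = (v - Pv) + Pv$ with $v-Pv \in \mathcal{H}(\Omega)$. Because $\widehat{\nabla}(Pv)=0$ and $\nabla\cdot(Pv) = 0$, we have $B_{\lambda,\mu}(u,Pv)=0$. Since $g\in\mathcal{R}^\perp$, also $\ell(Pv)=0$. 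Hence $B_{\lambda,\mu}(u,v)=\ell(v)$ for every $v \in H^1(\Omega)^3$, so $u$ is a weak solution and $[u]$ solves the quotient problem.

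For the converse, if a weak solution exists, testing with $v = r \in \mathcal{R}$ gives $\inner{g,r} = \ell(r) = B_{\lambda,\mu}(u,r) = 0$, so $g \in \mathcal{R}^\perp$. For uniqueness of the class, suppose $u_1,u_2$ are two weak solutions. Subtract them and project: $w = (u_1-u_2) - P(u_1-u_2) \in \mathcal{H}(\Omega)$ satisfies $B_{\lambda,\mu}(w,w) = B_{\lambda,\mu}(u_1-u_2,w) = 0$. Coercivity then gives $w=0$, so $u_1 - u_2 \in \mathcal{R}$ and $[u_1]=[u_2]$.

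Finally, I identify the representative. Each class $[u]$ meets $\mathcal{H}(\Omega)$ in exactly one point, namely $u - Pu$, because $\mathcal{H}(\Omega)\cap\mathcal{R}=\{0\}$. For any other element $u + r$ of the class, $r\in\mathcal{R}$, Pythagoras in $L^2(\Omega)^3$ gives $\norm{u+r}^2 = \norm{u}^2 + \norm{r}^2$ when $u \in \mathcal{H}(\Omega)$. So the centered representative is the unique $L^2$-minimizer in its class. None of these steps is a real obstacle; the only point needing care is the extension of the variational identity from $\mathcal{H}(\Omega)$ to all of $H^1(\Omega)^3$ through the projection $P$ and the condition $g\in\mathcal{R}^\perp$.
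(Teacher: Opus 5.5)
Your proof is correct. It differs from the paper's mainly in being self-contained.

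\textbf{What the paper does.} It obtains existence and uniqueness of the class $[u]\in H^1(\Omega)^3/\mathcal{R}$ by citing the quotient-space analysis in \cite{Ciarlet78} and \cite{Kaleem26}. It also cites \cite{Kaleem26} for the existence, uniqueness and $L^2(\Omega)^3$-minimality of the centered representative. Only two steps are argued in the text. The first is the necessity of $g\in\mathcal{R}^\perp$, by testing with rigid motions. The second is the a priori estimate, by testing with $v=u$ and using \eqref{eq:coer}, Cauchy--Schwarz and the trace theorem. You do both of these the same way.

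\textbf{What you do instead.} You apply Lax--Milgram directly on the closed subspace $\mathcal{H}(\Omega)$; the paper uses the same device later for the Fr\'echet derivative. You then use the $L^2(\Omega)^3$-orthogonal projection onto $\mathcal{R}$ for two purposes: to extend the variational identity to all test functions in $H^1(\Omega)^3$, and to prove uniqueness modulo $\mathcal{R}$. Minimality of the centered representative then follows from Pythagoras.

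\textbf{What each route buys.} Yours shows exactly where $g\in\mathcal{R}^\perp$ enters. The problem posed on $\mathcal{H}(\Omega)$ is solvable for every $g\in L^2(\partial\Omega)^3$; only the rigid test functions force the compatibility condition. It also gives the explicit constant $C=C_{\rm tr}C_{\rm K}^2/(2\inf\mu)$, which makes the dependence $C=C(\mu,\Omega)$ transparent. The paper's route is shorter and places the result in the standard quotient-space framework of the literature, but leaves these mechanisms implicit in the cited references.
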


\begin{proof} 
	The insolvability for $g \in L^2(\partial \Omega)^3 \setminus \mathcal{R}^\perp$ was already settled above. On the other hand, the unique solvability in $H^1(\Omega)^3 / \mathcal{R}$ for $g \in \mathcal{R}^\perp$, with continuous dependence on the data, is proven in \cite{Ciarlet78}; see also~\cite[theorem~3.4]{Kaleem26}. The fact that the corresponding solution $[u]$ has a unique representative $u$ in the space of centered displacements $\mathcal{H}(\Omega)$, with the claimed $L^2(\Omega)^3$-norm minimization property, follows directly from \cite[lemmas~5.2 \& 5.3]{Kaleem26}.
	
	As $u$ minimizes the $L^2(\Omega)^3$-norm instead of the $H^1(\Omega)^3$-norm, \eqref{eq_aprioriEst} is not an immediate consequence of the continuous dependence of $[u] \in H^1(\Omega)^3 / \mathcal{R}$ on $g \in \mathcal{R}^\perp$. However, choosing $v = u$ in the weak formulation and applying \eqref{eq:coer} yields
	\begin{equation*}
		\norm{u}_{H^1(\Omega)^3}^2 \leq  C' B_{\lambda,\mu}(u,u) = C' \ell(u) \leq C \norm{u}_{H^1(\Omega)^3} \norm{g}_{L^2(\partial \Omega)^3}, 
	\end{equation*}
	where the last step follows from the Cauchy-Schwarz inequality and the trace theorem.
\end{proof}

When we refer to the weak solution for $g \in \mathcal{R}^\perp$ in what follows, we mean the unique weak solution $u\in\mathcal{H}(\Omega)$, the existence of which is guaranteed by Proposition~\ref{prop_wellposedness}. 

\begin{rem} \label{rem_bc}
	The reason for considering the pure traction problem and resorting to the space of centered displacements $\mathcal H(\Omega)$ is to avoid imposing an auxiliary homogeneous Dirichlet condition on some part of the boundary $\partial\Omega$, as is done in,~e.g.,~\cite{EGH25,EH21,EP23}. The advantage of such a Dirichlet condition would be that it fixes $\Omega$ in place, which removes rigid body motions from the system. On the other hand, our choice enables avoiding problematic boundary conditions when constructing divergence-free localized solutions in Section~\ref{sec:divergence_free}.
\end{rem}

\subsection{The ND map and its properties}

The ND map $\Lambda_{\lambda, \mu}\colon \mathcal{R}^{\perp} \to L^2(\partial\Omega)^3$ (traction\hyp to\hyp displacement map) is defined as 
\begin{equation} \label{eq_ND_map}
	\Lambda_{\lambda, \mu}g = u|_{\partial\Omega},
\end{equation}
where $u\in \mathcal{H}(\Omega)$ is the corresponding unique weak solution for boundary traction $g\in\mathcal{R}^\perp$. 

\begin{rem}
	The operator $\Lambda_{\lambda, \mu}$ is symmetric in the sense that
	\begin{equation} \label{eq:self_adjoint}
		\inner{\Lambda_{\lambda,\mu} f, g} = B_{\lambda, \mu}(u_f, u_g) = \inner{\Lambda_{\lambda,\mu} g, f},
	\end{equation}
	where $u_f, u_g \in \mathcal{H}(\Omega)$ are the weak solutions for boundary tractions $f, g \in \mathcal{R}^\perp$. However, unlike in,~e.g.,~\cite{EH21,EGH25}, the identity \eqref{eq:self_adjoint} does not on its own give a characterization for $\Lambda_{\lambda,\mu}$ itself, but rather for $P \Lambda_{\lambda,\mu}$, where $P$ is the $L^2(\partial\Omega)^3$-orthogonal projection onto $\mathcal{R}^\perp$. That is, \eqref{eq:self_adjoint} would characterize the ND map if the trace of the representative $u$ of the corresponding solution equivalence class of $H^1(\Omega)^3 / \mathcal{R}$ in~\eqref{eq_ND_map} were chosen to be
orthogonal to $\mathcal{R}$ in $L^2(\partial \Omega)^3$, instead of the representative itself being orthogonal to $\mathcal{R}$ in 
$L^2(\Omega)^3$ (cf.~\eqref{eq_def_H}).
\end{rem}

Our subsequent analysis relies heavily on the so-called monotonicity inequalities that characterize the monotone dependence of $\Lambda_{\lambda,\mu}$ on $(\lambda, \mu) \in L^\infty_+(\Omega)^2$ in the Loewner order, as well as on properties of the Fr\'echet derivative of the forward map $\Lambda$. These are directly inherited from~\cite{EH21}, where the traction boundary condition is accompanied by an auxiliary homogeneous Dirichlet condition.

\begin{lem} \label{lem_monotonicity_ineq1}
	Assume $\mu_j,\lambda_j \in L^\infty_+(\Omega)$ for $j=1,2$. Denote $\Lambda_j = \Lambda_{\lambda_j, \mu_j}$ and let $u_j \in \mathcal{H}(\Omega)$ be weak solutions for $\mu=\mu_j$, $\lambda= \lambda_j$, and $g\in\mathcal{R}^\perp$. Then  
	\begin{align*}  
		\inner{(\Lambda_2-\Lambda_1)g, g } &\leq \int_{\Omega} (\lambda_1-\lambda_2)\abs{\nabla\cdot u_2}^2 + 2(\mu_1-\mu_2)\abs{\widehat{\nabla}u_2}^2\, \di x, \\[1mm]  
		\inner{(\Lambda_2-\Lambda_1)g, g} &\geq \int_{\Omega} \frac{\lambda_2}{\lambda_1}(\lambda_1-\lambda_2)\abs{\nabla\cdot u_2}^2 + 2\frac{\mu_2}{\mu_1}(\mu_1-\mu_2)\abs{\widehat{\nabla}u_2}^2 \, \di x. 
	\end{align*}
\end{lem}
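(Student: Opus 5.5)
The plan is the standard variational argument used for monotonicity inequalities (as in \cite{EH21}), adapted to the quotient-space setting. The key identity is the symmetry relation \eqref{eq:self_adjoint} together with the weak formulation. For the same $g\in\mathcal{R}^\perp$, we have $\inner{\Lambda_j g, g} = B_j(u_j,u_j)$, where $B_j = B_{\lambda_j,\mu_j}$. Moreover, testing the weak formulation for $u_1$ with $v = u_2$ gives $B_1(u_1,u_2) = \inner{g, u_2|_{\p\Omega}} = B_2(u_2,u_2)$, and similarly $B_2(u_2,u_1) = B_1(u_1,u_1)$. Rigid-motion ambiguity causes no trouble: all $u_j$ lie in $\mathcal{H}(\Omega)$, $g\in\mathcal{R}^\perp$, and the bilinear forms ignore $\mathcal{R}$, so the nonuniqueness issue in the Remark does not enter.

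For the upper bound, expand the nonnegative quantity $B_1(u_1-u_2,u_1-u_2)\geq 0$. Using the identities above, it equals $B_1(u_1,u_1) - 2B_2(u_2,u_2) + B_1(u_2,u_2)$. Since $B_1(u_1,u_1) = \inner{\Lambda_1 g,g}$ and $B_2(u_2,u_2)=\inner{\Lambda_2 g,g}$, this rearranges to
\begin{equation*}
\inner{(\Lambda_2-\Lambda_1)g,g} \leq B_1(u_2,u_2) - B_2(u_2,u_2),
\end{equation*}
which is exactly the first claimed inequality after writing out the bilinear forms.

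For the lower bound, I would swap the roles: $B_2(u_2-u_1,u_2-u_1)\ge 0$ gives $\inner{(\Lambda_2-\Lambda_1)g,g}\ge B_1(u_1,u_1)-B_2(u_1,u_1)$. This involves $u_1$ rather than $u_2$, so a sharper pointwise argument is needed. Instead, write
\begin{equation*}
\inner{\Lambda_2 g,g}-\inner{\Lambda_1 g,g} = B_2(u_2,u_2) - B_1(u_1,u_1) = B_1(u_1,u_1) - 2B_1(u_1,u_2) + B_2(u_2,u_2) + 2\bigl(B_1(u_1,u_2) - B_1(u_1,u_1)\bigr),
\end{equation*}
and use $B_1(u_1,u_2)=B_2(u_2,u_2)$ to obtain
\begin{equation*}
\inner{(\Lambda_2-\Lambda_1)g,g} = B_1(u_1,u_1) - 2B_1(u_1,u_2) + B_2(u_2,u_2).
\end{equation*}
Now minimize pointwise over $u_1$. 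Write the integrand as $\lambda_1 a^2 - 2\lambda_1 a b + \lambda_2 b^2$ with $a=\nabla\cdot u_1$ and $b = \nabla\cdot u_2$, and analogously $2\mu_1 A:A - 4\mu_1 A:M + 2\mu_2 M:M$ for the symmetric gradients. Completing the square gives a minimum of $(\lambda_2-\lambda_1)b^2 + \ldots$ over $a$.

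That only yields $\int(\lambda_2-\lambda_1)|b|^2$, which is the wrong sign pattern, so the grouping has to be redone: use $B_1(u_1,u_2)=B_2(u_2,u_2)$ for only part of the cross term. The correct route, as in \cite{EH21}, is to expand
\begin{equation*}
0\le B_1\bigl(u_1-\tfrac{\cdot}{\cdot}u_2,\ldots\bigr)
\end{equation*}
pointwise with weights, namely $\lambda_1\bigl|a - \tfrac{\lambda_2}{\lambda_1}b\bigr|^2\ge0$. The representation to use is
\begin{equation*}
\inner{(\Lambda_2-\Lambda_1)g,g} = B_1(u_1,u_1) - 2B_2(u_2,u_1) + B_2(u_2,u_2),
\end{equation*}
which holds because $B_2(u_2,u_1)=B_1(u_1,u_1)$. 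Its divergence part has integrand $\lambda_1a^2 - 2\lambda_2ab + \lambda_2b^2 = \lambda_1\bigl(a-\tfrac{\lambda_2}{\lambda_1}b\bigr)^2 + \bigl(\lambda_2 - \tfrac{\lambda_2^2}{\lambda_1}\bigr)b^2$, and the last term equals $\tfrac{\lambda_2}{\lambda_1}(\lambda_1-\lambda_2)b^2$. The $\mu$ part is handled identically with Frobenius products. Dropping the squares gives the second inequality. The main obstacle is finding this correct grouping; everything else is routine, with boundedness from below of $\lambda_1,\mu_1$ ensuring that the quotients lie in $L^\infty$.
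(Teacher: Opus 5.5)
Your argument is correct and matches the route the paper takes by deferring to \cite{EH21}: the upper bound comes from expanding $B_1(u_1-u_2,u_1-u_2)\geq 0$. The lower bound comes from the identity $\inner{(\Lambda_2-\Lambda_1)g,g} = B_1(u_1,u_1)-2B_2(u_2,u_1)+B_2(u_2,u_2)$, followed by the pointwise completed square $\lambda_1\bigl|\nabla\cdot u_1-\tfrac{\lambda_2}{\lambda_1}\nabla\cdot u_2\bigr|^2\geq 0$ and its $\mu$-analogue. In a clean write-up, delete the abandoned middle attempt and the unfinished placeholder expression after it: its displayed identity is actually wrong, since its right-hand side equals $\inner{(\Lambda_1-\Lambda_2)g,g}$.
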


\begin{proof}
	The result follows from the same line of reasoning as \cite[lemmas~2.1 \& 2.2]{EH21}.
\end{proof}

\begin{lem} \label{lem_Frechet}
	For any $(\lambda, \mu) \in L^\infty_+(\Omega)^2$, there exists a Fr\'echet derivative $\DLambda_{\lambda,\mu} \colon  L^\infty(\Omega)^2 \to \mathscr{L}(\mathcal{R}^\perp, L^2(\partial \Omega)^3)$ satisfying
	\begin{equation} \label{bilinear_Frechet}
		\inner{\DLambda_{\lambda,\mu}[h_\lambda,h_\mu] f,g} = -\int_\Omega h_\lambda (\nabla \cdot u_f)(\nabla \cdot u_g) + 2 h_\mu \widehat{\nabla} u_f :\widehat{\nabla} u_g  \,\di x,  \quad f, g \in \mathcal{R}^\perp,
	\end{equation}
	where $u_f, u_g \in \mathcal{H}(\Omega)$ are the weak solutions for the boundary tractions $f, g \in \mathcal{R}^\perp$, respectively.
\end{lem}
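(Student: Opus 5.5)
The plan is to write down the candidate derivative explicitly, check that it is a bounded linear operator, and then show that the linearization error is quadratically small by a direct computation in the weak formulation.

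\emph{Definition of the candidate.} Fix $(\lambda,\mu)\in L^\infty_+(\Omega)^2$ and $h=(h_\lambda,h_\mu)\in L^\infty(\Omega)^2$. For $f\in\mathcal{R}^\perp$ let $u_f\in\mathcal{H}(\Omega)$ be the weak solution. Define $w_f\in\mathcal{H}(\Omega)$ as the unique solution of
\begin{equation*}
B_{\lambda,\mu}(w_f,v) = -\int_\Omega h_\lambda(\nabla\cdot u_f)(\nabla\cdot v) + 2h_\mu\widehat{\nabla}u_f:\widehat{\nabla}v\,\di x, \quad \forall v\in\mathcal{H}(\Omega).
\end{equation*}
Existence and uniqueness follow from Lax--Milgram, using \eqref{eq:coer} and \eqref{eq:cont}. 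The test class can be enlarged to all $v\in H^1(\Omega)^3$, since both sides vanish on $\mathcal{R}$. Set $\DLambda_{\lambda,\mu}[h]f = w_f|_{\partial\Omega}$.

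\emph{Boundedness and the bilinear identity.} Proposition~\ref{prop_wellposedness} together with the trace theorem gives
\begin{equation*}
\norm{w_f|_{\partial\Omega}}_{L^2(\partial\Omega)^3}\le C\norm{h}_{L^\infty}\norm{f}_{L^2(\partial\Omega)^3},
\end{equation*}
so $h\mapsto\DLambda_{\lambda,\mu}[h]$ is linear and bounded into $\mathscr{L}(\mathcal{R}^\perp,L^2(\partial\Omega)^3)$. For $g\in\mathcal{R}^\perp$ we have $\inner{w_f|_{\partial\Omega},g}=B_{\lambda,\mu}(u_g,w_f)$ by the weak equation for $u_g$ tested with $w_f$. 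This equals $B_{\lambda,\mu}(w_f,u_g)$ by symmetry, which is exactly the right-hand side of \eqref{bilinear_Frechet} with $v=u_g$.

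\emph{Remainder estimate.} Let $\tilde u_f$ denote the solution for the perturbed parameters $(\lambda+h_\lambda,\mu+h_\mu)$, which lie in $L^\infty_+$ once $\norm{h}_\infty$ is small. Subtracting the two weak formulations gives
\begin{equation*}
B_{\lambda,\mu}(\tilde u_f-u_f,v) = -\int_\Omega h_\lambda(\nabla\cdot\tilde u_f)(\nabla\cdot v)+2h_\mu\widehat{\nabla}\tilde u_f:\widehat{\nabla}v\,\di x.
\end{equation*}
Coercivity then yields $\norm{\tilde u_f-u_f}_{H^1}\le C\norm{h}_\infty\norm{\tilde u_f}_{H^1}\le C\norm{h}_\infty\norm{f}$. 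The constant in \eqref{eq_aprioriEst} depends only on $\inf\mu$ and $\Omega$, so it is uniform for small $h$.

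Next let $r=\tilde u_f-u_f-w_f$, which lies in $\mathcal{H}(\Omega)$ because that space is linear. It satisfies
\begin{equation*}
B_{\lambda,\mu}(r,v) = -\int_\Omega h_\lambda\,\nabla\cdot(\tilde u_f-u_f)\,(\nabla\cdot v)+2h_\mu\widehat{\nabla}(\tilde u_f-u_f):\widehat{\nabla}v\,\di x.
\end{equation*}
Taking $v=r$ and using \eqref{eq:coer} gives $\norm{r}_{H^1}\le C\norm{h}_\infty^2\norm{f}$. By the trace theorem,
\begin{equation*}
\norm{\Lambda_{\lambda+h_\lambda,\mu+h_\mu}-\Lambda_{\lambda,\mu}-\DLambda_{\lambda,\mu}[h]}\le C\norm{h}_\infty^2,
\end{equation*}
which proves Fr\'echet differentiability.

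\emph{Main obstacle.} The only nonstandard point is the pure-traction setting. All solutions have to be taken in the same representative space $\mathcal{H}(\Omega)$, so that the differences $\tilde u_f-u_f$ and $r$ stay in $\mathcal{H}(\Omega)$ and Korn's inequality \eqref{eq_korn2_2} applies to them. This holds because $\mathcal{H}(\Omega)$ is a linear subspace and the traces are those of the centered representatives, consistent with \eqref{eq_ND_map}.
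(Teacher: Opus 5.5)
Your proposal is correct and takes the same route as the paper. You define the same $w_f\in\mathcal{H}(\Omega)$ via Lax--Milgram, get the bilinear identity by testing with $u_g$, and prove Fr\'echet differentiability through uniform coercivity near $(\lambda,\mu)$, writing out explicitly the $O(\norm{h}_\infty^2)$ remainder estimate that the paper calls straightforward and leaves to a reference. One cosmetic point: $w_f$ does not solve a traction problem, so its bound comes from the Lax--Milgram (coercivity) estimate combined with Proposition~\ref{prop_wellposedness} applied to $u_f$, not from that proposition applied to $w_f$ itself.
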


\begin{proof}
	Let $w \in \mathcal{H}(\Omega)$ be the solution to
	\begin{equation} \label{eq:Frechet}
		B_{\lambda, \mu}(w, v) = -B_{h_\lambda, h_\mu}(u_f,v) \quad \text{for all } v \in \mathcal{H}(\Omega),
	\end{equation}
	whose unique solvability is guaranteed by combining \eqref{eq:coer}--\eqref{eq:cont} with the Lax--Milgram theorem. By exploiting the uniform coercivity of the bilinear form $B_{\widetilde{\lambda}, \widetilde{\mu}}$ for $(\widetilde{\lambda}, \widetilde{\mu})$ in a small enough open neighborhood of $(\lambda, \mu) \in L^\infty_+(\Omega)^2$ as well as its linear dependence on the Lam\'e parameters, it follows straightforwardly that the Fr\'echet derivative is given by 
	\begin{equation*}
		\DLambda_{\lambda,\mu}[h_\lambda,h_\mu] f = w|_{\partial \Omega};
	\end{equation*}
	see,~e.g.,~\cite{Garde22} for a comprehensive analysis in a related setting. Finally, choosing $v = u_g$ in \eqref{eq:Frechet} proves \eqref{bilinear_Frechet}.
\end{proof}

As for the ND map in \eqref{eq:self_adjoint}, the identity \eqref{bilinear_Frechet} does not give a characterization for $\DLambda_{\lambda,\mu}$, but for its projected version $P \DLambda_{\lambda,\mu}$.

\section{Main results} \label{sec:main}

We will now consider monotonicity-based reconstruction related to $\supp(\mu-\mu_0)$ for a known \emph{constant} ``background'' shear modulus $\mu_0$, based on $\Lambda_{\lambda,\mu}$ for unknown $\lambda,\mu\in
L^\infty_+(\Omega)$. Specifically, we will call
\begin{equation*}
	D = \supp(\mu-\mu_0)
\end{equation*} 
the inclusions (which may consist of several connected components). We will assume
\begin{equation*}
	D \Subset \Omega,
\end{equation*}
and will reconstruct the \emph{outer shape} $D^\bullet$.
\begin{defn}
	The \emph{outer shape} $D^\bullet$ of $D$ is the smallest closed set with connected complement such that~$D\subseteq D^\bullet$. 
\end{defn}
Let $\lambda_0$ be some positive constant that we may choose as we like. We assume to know positive scalars satisfying
\begin{alignat*}{2}
	\alpha_\lambda &\leq \inf(\lambda), &\qquad \alpha_\mu &\leq \min\{\inf(\mu),\mu_0\}, \\
	\beta_\lambda &\geq \sup(\lambda), &\qquad \beta_\mu &\geq \max\{\sup(\mu),\mu_0\}.
\end{alignat*}
There are two main approaches for the monotonicity method: the \emph{outer approach} which is the most general, and the \emph{inner approach} that requires much stronger assumptions but may be more straightforward to implement numerically. 

A notable difference, compared to previous results, is that here the signs for $\mu-\mu_0$ and $\lambda-\lambda_0$ do not have to align, i.e.,~$(\mu-\mu_0)(\lambda-\lambda_0)$ is not required to be non-negative throughout $\Omega$. This will be achieved by using certain localizing solutions that are divergence-free.

We state the geometric assumptions on the sets on which we localize separately, as we will also refer to them later, throughout Section~\ref{sec:divergence_free}.
\begin{assumption} \label{assump_D1D2}
	Let $D_1, D_2 \subset \Omega$ be open and assume that $\partial D_1$ is Lipschitz continuous, $\partial D_2$ is smooth, and $D_2$ is non-empty. Moreover, assume that $D_1 \cap D_2 = \emptyset$ and $\Omega \setminus \overline{(D_1 \cup D_2)}$ is connected.
\end{assumption}
\begin{thm} \label{thm_loc}
	Let $D_1$ and $D_2$ satisfy Assumption~\ref{assump_D1D2}. There exist $g_j\in\mathcal{R}^\perp$ and $u_j\in\mathcal{H}(\Omega)$ satisfying 
	\begin{equation*}
		\nabla \cdot \big(\lambda_0 (\nabla \cdot u_j) I + 2 \mu_0 \widehat{\nabla} u_j  \big) = 0 \text{ in } \Omega \quad\text{and}\quad \gamma_{\C} u_j = g_j \text{ on }\partial\Omega,
	\end{equation*}
	and 
	\begin{equation*}
		\nabla\cdot u_j = 0 \text{ in } \Omega,
	\end{equation*}
	localized such that
	\begin{equation*}
		\norm{\widehat{\nabla} u_j}_{L^2(D'_1)^{3 \times 3}} \to 0 \quad \text{and} \quad \norm{\widehat{\nabla} u_j}_{L^2(D_2)^{3 \times 3}} \to \infty
	\end{equation*}
	as $j \to \infty$  for any $D'_1 \Subset D_1$. 
\end{thm}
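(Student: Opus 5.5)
We take the solutions to be gradients of harmonic potentials, $u_j = \nabla\varphi_j$ with $\Delta\varphi_j = 0$ in $\Omega$. Then $\nabla\cdot u_j = 0$ and $\widehat{\nabla}u_j = \nabla^2\varphi_j$ is symmetric. Since $\mu_0$ is constant,
\begin{equation*}
\nabla\cdot(\lambda_0(\nabla\cdot u_j)I + 2\mu_0\widehat{\nabla}u_j) = 2\mu_0\nabla(\Delta\varphi_j) = 0,
\end{equation*}
so $u_j$ solves the background equation with traction $g_j = 2\mu_0(\nabla^2\varphi_j)\nu|_{\partial\Omega}$. This $g_j$ automatically lies in $\mathcal{R}^\perp$, because $\inner{g_j,r} = B_{\lambda_0,\mu_0}(u_j,r) = 0$ for every $r \in \mathcal{R}$. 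Taking $\varphi_j$ smooth up to the boundary (e.g.\ harmonic in a neighbourhood of $\overline\Omega$) gives $g_j \in L^2(\partial\Omega)^3$. The remaining requirement is $u_j \in \mathcal{H}(\Omega)$, i.e.\ the twelve linear constraints $\int_\Omega \nabla\varphi_j\cdot r\,\di x = 0$ for $r\in\mathcal{R}$. I would \emph{not} enforce these by subtracting a rigid motion: that would leave the strain unchanged but break the identity $u_j=\nabla\varphi_j$. Instead I would enforce them inside the class of harmonic gradients, via a constrained Runge approximation.

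\emph{Step 1 (unconstrained localization).} Let $U=\Omega\setminus\overline{D_1\cup D_2}$, which is connected. By Runge approximation for harmonic functions we get the following. Since $\overline{D_1}\cup\overline{D_2}$ does not separate, the function equal to $0$ near $\overline{D_1'}$ and to a prescribed harmonic $\psi$ near $\overline{D_2}$ can be approximated in $H^2$ on neighbourhoods of $\overline{D_1'}$ and $\overline{D_2'}$ by harmonic functions on $\Omega$. Here I would use the smoothness of $\partial D_2$, and a slightly enlarged $D_2'$ or a Lipschitz exhaustion of $D_1$. Equivalently, and more in the style of monotonicity papers, I would argue by functional analysis. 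Consider the operators $A_k\colon \varphi \mapsto \nabla^2\varphi|_{D_k}$ defined on the space $V$ of harmonic $\varphi$ with $\nabla\varphi\in\mathcal{H}(\Omega)$. The claim to prove is that $\operatorname{ran}A_2^*\not\subseteq\operatorname{ran}A_1^*$. By the standard lemma (e.g.\ \cite{Garde2020}, \cite{HPS19b}), this yields a sequence with $\norm{A_1\varphi_j}\to0$ and $\norm{A_2\varphi_j}\to\infty$. I will take the range non-inclusion route.

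\emph{Step 2 (the range non-inclusion).} Suppose $A_2^*F_2 = A_1^*F_1$ for some tensor fields $F_k\in L^2(D_k)^{3\times3}$. Then
\begin{equation*}
\int_{D_2}F_2:\nabla^2\varphi\,\di x = \int_{D_1}F_1:\nabla^2\varphi\,\di x \quad\text{for all }\varphi\in V.
\end{equation*}
Define the potential $w=\nabla\cdot\nabla\cdot(F_2\chi_{D_2}-F_1\chi_{D_1})$ in the distributional sense. Testing with harmonic $\varphi$ subject to the finitely many constraints shows that $\Delta^{-1}w$, the Newtonian-type potential, coincides outside $\overline\Omega$ with an element of a finite-dimensional space. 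This space is spanned by potentials generated by the constraint functionals $\varphi\mapsto\int_\Omega\nabla\varphi\cdot r$, i.e.\ by $\nabla\cdot(r\chi_\Omega)$. After subtracting that finite-dimensional correction, unique continuation across the connected set $U$ forces the potential generated by $F_2$ to vanish near $\partial D_2$ from outside, while it is nonzero if $F_2$ is chosen appropriately. To avoid degenerate choices I would take $F_2=\nabla^2\psi$ for a suitable harmonic $\psi$. The finite-dimensional correction is handled by a codimension argument: a sequence that works for the unconstrained problem can be corrected by adding harmonic functions from a fixed twelve-dimensional complement. Those correctors are bounded, so they preserve the divergences $0$ and $\infty$. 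To realize this, I would pick harmonic polynomials $q_1,\dots,q_{12}$ whose gradients pair nondegenerately with $\mathcal{R}$. For instance, $\nabla q=\eta$ for translations; for the rotations, use quadratics such as $x_1x_2$, noting that gradients of harmonics are curl-free and so may be orthogonal to rotations, in which case that constraint is automatically satisfied.

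\emph{Main obstacle.} The hardest part is exactly this interaction between the rigid-motion constraints and localization. In particular, I must check which constraints are nontrivial on harmonic gradients, and confirm that the correction does not destroy the blow-up on $D_2$ or the decay on $D_1'$. That requires boundedness of the correctors in $H^2$ on $D_1'$ and $D_2$, which holds since they come from a fixed finite-dimensional space. I would settle it by integrating by parts: $\int_\Omega\nabla\varphi\cdot r\,\di x=\int_{\partial\Omega}\varphi\, r\cdot\nu\,\di S$, since $\nabla\cdot r=0$. This reduces the constraints to boundary moments of $\varphi$, which the fixed finite family of harmonic polynomials can adjust.
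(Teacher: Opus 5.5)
Your framework is the paper's: $u_j=\nabla\varphi_j$ with $\varphi_j$ harmonic, and $g_j\in\mathcal{R}^\perp$ comes for free. The condition $u_j\in\mathcal{H}(\Omega)$ then reduces to the boundary moments $\int_{\partial\Omega}\varphi\,(r\cdot\nu)\,\di S=0$; there are six of these, not twelve, since $\dim\mathcal{R}=6$.

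\textbf{The gap: enforcing the constraints.} This is the only nontrivial point of the theorem. Your claim that the correctors are bounded ``since they come from a fixed finite-dimensional space'' does not follow. Their coefficients are a fixed matrix applied to the defects $\int_{\partial\Omega}\tilde\varphi_j\,(r\cdot\nu)\,\di S$ of the unconstrained sequence, and nothing in Step~1 controls these. Runge approximation on $\overline{D_1'}\cup\overline{D_2}$ gives no information near $\partial\Omega$. In general these functionals are not continuous with respect to $\norm{\cdot}_{H^2(D_1\cup D_2)}$, so a localized sequence can be perturbed by harmonic functions that are negligible near $\overline{D_1'}\cup\overline{D_2}$ but have arbitrarily large moments.

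Even bounded correctors would not preserve the decay on $D_1'$ on their own. What you actually need is that the defects are $o(\norm{\nabla^2\tilde\varphi_j}_{L^2(D_2)})$, followed by a rescaling.

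Put the origin at the centroid of $\Omega$. Translation defects are then harmless: adding $a\cdot x$ leaves $\nabla^2\varphi$ unchanged and does not affect the rotational moments. For the same reason, no affine function can change the rotational moments. These three constraints are nontrivial for a general $\Omega$, since $\nu\cdot(\omega\times x)\equiv0$ on $\partial\Omega$ only if $\Omega$ is rotationally symmetric about that axis. They can therefore only be corrected by non-affine harmonic $q$, whose Hessian cannot vanish on $D_1'$ by analyticity. Unbounded coefficients then destroy $\norm{\widehat{\nabla}u_j}_{L^2(D_1')}\to0$.

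\textbf{The range non-inclusion sketch does not close the gap either.} Subtracting the single-layer potentials of $r\cdot\nu$ does not give a function with zero Cauchy data on $\partial\Omega$ that can be continued into $U$, because these potentials jump in normal derivative across $\partial\Omega$. Moreover, even without constraints, continuation only identifies the $F_2$-potential with the $F_1$-potential, which is harmonic across $\overline{D_2}$; it does not identify it with zero.

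The correct conclusion is that, on $U$, the potential of $F_2$ lies in a fixed finite-dimensional space plus functions harmonic across $\overline{D_2}$. So a nonzero $F_2$ is not enough. You must choose $F_2$ outside this finite-dimensional obstruction by a dimension count. For example, take $F_2=\nabla^2\psi$ with $\psi$ ranging over a sufficiently large space of harmonic polynomials modulo affine ones; injectivity comes from $\int_{D_2}\abs{\nabla^2\psi}^2\,\di x>0$.

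This is exactly the step the paper supplies:
\begin{enumerate}[(i)]
\item Unique continuation shows that $\mathcal{X}$ is finite-dimensional.
\item The choice $\widetilde f\perp\mathcal{X}+(\nu\cdot\mathcal{R})|_{\partial D_2}+\Span\{1\}$ makes the target $\phi$ (zero on $D_1$, harmonic extension of $\widetilde f$ in $D_2$) orthogonal to $\mathcal{F}$. By Lemma~\ref{lem_S_D}, $\phi$ then lies in the $L^2(D_1\cup D_2)$-closure of the constrained class.
\item An approximating sequence is rescaled, and interior elliptic regularity upgrades the $L^2$ bounds to $H^2$ bounds on $D_1'$ and on some $D_2'\Subset D_2$.
\end{enumerate}
Some orthogonality or dimension-count step of this kind is indispensable, and it is missing from your proposal.
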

\begin{proof}
	The proof is given in Section~\ref{sec:divergence_free}, culminating in Section~\ref{sec:proofthmloc}.
\end{proof}

\subsection{Outer approach} 

Here we consider the outer approach of the monotonicity method, for determining $D^\bullet$, where we allow $\mu-\mu_0$ to take both positive and negative values (see, e.g.,~\cite[Theorem~5.1]{EGH25}).

Consider the following set of admissible test inclusions:
\begin{equation*}
	\mathcal{A} = \{\, C \Subset \Omega : C \text{ is the closure of an open set and has connected complement} \,\}.
\end{equation*}
We define the following test operators for a measurable set $C$:
\begin{align*}
	\DLambda_C^+ &= \DLambda_{\lambda_0,\mu_0} [(\beta_\lambda-\lambda_0)\chi_\Omega,(\beta_\mu-\mu_0)\chi_C], \\
	\DLambda_C^- &= \DLambda_{\lambda_0,\mu_0} \big[\tfrac{\lambda_0}{\alpha_\lambda}(\alpha_\lambda-\lambda_0)\chi_\Omega,\tfrac{\mu_0}{\alpha_\mu}(\alpha_\mu-\mu_0)\chi_C \big],
\end{align*}
where $\chi$ denotes the characteristic function of the indicated set. Note that we use a characteristic function on $\Omega$ in the $\lambda$-direction; this is needed for the ``easy'' direction of the proof, to take care of the support of $\lambda-\lambda_0$ which may extend outside $D^\bullet$. In the ``difficult'' direction of the proof, this contribution is obviously not seen by the divergence-free localized sequence of solutions.

We moreover require the following technical assumption. 
\begin{assumption} \label{assump:technical}
	For every $x\in\partial D^\bullet$ and every open neighborhood $W$ of $x$, assume there exists a relatively open connected set $V\subset D^\bullet\cap W$ that intersects $\partial D^\bullet$, satisfying either of two options:
	\begin{enumerate}[\rm(a)]
		\item $\mu\geq \mu_0$ in $V$, and there exists an open ball $B\Subset V$ such that $\inf_B(\mu -\mu_0)>0$.
		\item $\mu\leq \mu_0$ in $V$, and there exists an open ball $B\Subset V$ such that $\sup_B(\mu -\mu_0)<0$.
	\end{enumerate}
\end{assumption}
Note, in particular, that the definiteness condition in Assumption~\ref{assump:technical} is only required near $\partial D^\bullet$, and at a positive distance from $\partial D^\bullet$, $\mu$ can be a general $L^\infty_+$ function (and $\lambda$ can be a general $L^\infty_+$ function in the whole domain). Moreover, the assumption does not require a jump away from $\mu_0$, but also allows, e.g.,~a continuous deviation in the form of a strict local increase/decrease away from $\mu_0$ when entering $D^\bullet$.

\begin{thm} \label{thm:outer} 
	For a measurable  set $C\subseteq\Omega$, we have
	\begin{equation*}
		D\subseteq C \quad \text{implies} \quad \DLambda_C^- \geq \Lambda_{\lambda,\mu} - \Lambda_{\lambda_0,\mu_0} \geq \DLambda_C^+.
	\end{equation*}
	Under Assumption~\ref{assump:technical}, for $C\in\mathcal{A}$ we have
	\begin{equation*}
		\DLambda_C^- \geq \Lambda_{\lambda,\mu}-\Lambda_{\lambda_0,\mu_0} \geq \DLambda_C^+ \quad \text{implies} \quad D\subseteq C.
	\end{equation*}
\end{thm}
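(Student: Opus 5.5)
The plan has two parts: an ``easy'' direction that combines the monotonicity inequalities with the Fr\'echet derivative formula, and a ``difficult'' direction that argues by contradiction using the divergence-free localized solutions of Theorem~\ref{thm_loc}.

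\textbf{Easy direction.} Assume $D\subseteq C$ and fix $g\in\mathcal{R}^\perp$. Let $u_0$ and $u$ be the solutions for $(\lambda_0,\mu_0)$ and $(\lambda,\mu)$. Applying the upper bound of Lemma~\ref{lem_monotonicity_ineq1} with $(\lambda_1,\mu_1)=(\lambda,\mu)$, $(\lambda_2,\mu_2)=(\lambda_0,\mu_0)$ and swapping signs gives
\begin{equation*}
\inner{(\Lambda_{\lambda,\mu}-\Lambda_{\lambda_0,\mu_0})g,g}\geq \int_\Omega (\lambda_0-\lambda)\abs{\nabla\cdot u_0}^2+2(\mu_0-\mu)\abs{\widehat\nabla u_0}^2\,\di x .
\end{equation*}
On $\Omega$ we have $\lambda_0-\lambda\geq -(\beta_\lambda-\lambda_0)$. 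Since $\mu=\mu_0$ off $D\subseteq C$, also $\mu_0-\mu\geq -(\beta_\mu-\mu_0)\chi_C$. By \eqref{bilinear_Frechet} with $f=g$, the right-hand side is therefore bounded below by $\inner{\DLambda_C^+g,g}$.

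For the other inequality, use the lower bound of Lemma~\ref{lem_monotonicity_ineq1} with the same roles:
\begin{equation*}
\inner{(\Lambda_{\lambda,\mu}-\Lambda_{\lambda_0,\mu_0})g,g}\leq \int_\Omega \tfrac{\lambda_0}{\lambda}(\lambda_0-\lambda)\abs{\nabla\cdot u_0}^2+2\tfrac{\mu_0}{\mu}(\mu_0-\mu)\abs{\widehat\nabla u_0}^2\,\di x .
\end{equation*}
The function $t\mapsto \tfrac{\lambda_0}{t}(\lambda_0-t)$ is decreasing, so it is at most $\tfrac{\lambda_0}{\alpha_\lambda}(\lambda_0-\alpha_\lambda)$; the same holds for $\mu$ with $\alpha_\mu$, and the term vanishes off $C$. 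This yields $\DLambda_C^-$ after using \eqref{bilinear_Frechet}. The inequalities are understood in the quadratic-form sense on $\mathcal{R}^\perp$, which is exactly what \eqref{bilinear_Frechet} controls.

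\textbf{Difficult direction.} Suppose $C\in\mathcal{A}$ satisfies both inequalities but $D\not\subseteq C$. Then $D^\bullet\not\subseteq C$. Since $C$ is closed with connected complement and $D^\bullet$ is the smallest such set containing $D$, there is a point $x\in\partial D^\bullet\setminus C$. Choose a neighborhood $W$ of $x$ with $\overline W\cap C=\emptyset$, and take $V$ and the ball $B\Subset V$ from Assumption~\ref{assump:technical}. Suppose case (a) holds, so $\mu\geq\mu_0$ in $V$. Set $D_2=B$, and let $D_1$ be an open Lipschitz set containing $C\cup (D^\bullet\setminus V)$ minus a thin neighbourhood of $B$. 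We arrange $D_1\cap B=\emptyset$ and make $\Omega\setminus\overline{D_1\cup D_2}$ connected by carving a tube from $B$ through $V$ towards $\Omega\setminus D^\bullet$, which is connected to $\partial\Omega$. Theorem~\ref{thm_loc} then provides divergence-free $u_j$ with $\widehat\nabla u_j\to0$ in $L^2(D_1')$ and $\norm{\widehat\nabla u_j}_{L^2(B)}\to\infty$.

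Now test with $g_j$ in the inequality $\Lambda_{\lambda,\mu}-\Lambda_{\lambda_0,\mu_0}\leq\DLambda_C^-$, combined with the lower bound for the left side from the easy direction. Because $\nabla\cdot u_j=0$, every $\lambda$-term disappears, and we obtain
\begin{equation*}
\int_\Omega 2(\mu_0-\mu)\abs{\widehat\nabla u_j}^2\,\di x\leq 2\int_C \tfrac{\mu_0}{\alpha_\mu}(\mu_0-\alpha_\mu)\abs{\widehat\nabla u_j}^2\,\di x .
\end{equation*}
The right side tends to $0$ because $C\subset D_1'$. On the left, the contribution from $\Omega\setminus D^\bullet$ is zero, and the contribution from the part of $D^\bullet$ inside $D_1'$ tends to $0$. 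What remains is $-2\int_V(\mu-\mu_0)\abs{\widehat\nabla u_j}^2$, which is $\le -2\inf_B(\mu-\mu_0)\norm{\widehat\nabla u_j}^2_{L^2(B)}\to-\infty$. That alone is no contradiction, so case (a) must instead use the other inequality, $\Lambda_{\lambda,\mu}-\Lambda_{\lambda_0,\mu_0}\geq\DLambda_C^+$. Bounding the left side from above with Lemma~\ref{lem_monotonicity_ineq1} in the form $\int 2\tfrac{\mu}{\mu_0}(\mu_0-\mu)\abs{\widehat\nabla u_j}^2$ gives a quantity tending to $-\infty$ on $B$, while the right side $-2\int_C(\beta_\mu-\mu_0)\abs{\widehat\nabla u_j}^2\to0$. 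This is a contradiction. Case (b) is symmetric and uses $\DLambda_C^-$ together with the displayed lower bound.

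\textbf{Main obstacle.} The delicate step is the geometric construction of $D_1$ and $D_2$ satisfying Assumption~\ref{assump_D1D2}. $D_1$ must contain $C$ and all of $D^\bullet$ outside $V$ (up to compact subsets $D_1'$), while $\Omega\setminus\overline{D_1\cup D_2}$ stays connected through $V\cap W$ near $x$. Handling the parts where $\mu-\mu_0$ has an indefinite sign requires shrinking $D_1'$ appropriately, and the Lipschitz regularity must be arranged by a standard mollification or polyhedral approximation argument. A further point is that the tails of $\widehat\nabla u_j$ near $\partial D_1$ are not controlled. To handle this, I would take $D_1'$ to cover all of $\supp(\mu-\mu_0)\setminus V$ together with $C$, so that every term whose sign is uncontrolled lies inside $D_1'$.
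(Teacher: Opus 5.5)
Your proposal is correct and follows the paper's proof essentially step for step. The easy direction uses Lemma~\ref{lem_monotonicity_ineq1}, \eqref{bilinear_Frechet} and the same pointwise bounds. The difficult direction localizes with the divergence-free fields of Theorem~\ref{thm_loc} near a point of $\partial D^\bullet\setminus C$ so that all $\lambda$-terms vanish; case (a) then contradicts $\Lambda_{\lambda,\mu}-\Lambda_{\lambda_0,\mu_0}\geq\DLambda_C^+$ and case (b) contradicts $\DLambda_C^-\geq\Lambda_{\lambda,\mu}-\Lambda_{\lambda_0,\mu_0}$. The only slip is the weight $\tfrac{\mu}{\mu_0}$ in your case (a) bound, which should be $\tfrac{\mu_0}{\mu}$ as in your easy direction; this does not affect the conclusion.
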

\begin{proof}
	The proof is given in Section~\ref{sec:proofthmouter}.
\end{proof}
\begin{rem}
	Under the given assumptions, Theorem~\ref{thm:outer} gives
	\begin{equation*}
		D^\bullet = \cap\,\{\, C\in\mathcal{A} : \DLambda_C^- \geq \Lambda_{\lambda,\mu}-\Lambda_{\lambda_0,\mu_0} \geq \DLambda_C^+ \,\}.
	\end{equation*}
	Additionally, as observed in the proof:
	\begin{itemize}
		\item[$\diamond$] If $\mu\geq\mu_0$ everywhere near $\partial D^\bullet$ (only arriving at case (a) of Assumption~\ref{assump:technical}), then we only need to consider the inequality $\Lambda_{\lambda,\mu}-\Lambda_{\lambda_0,\mu_0} \geq \DLambda_C^+$. 
		\item[$\diamond$] If $\mu\leq\mu_0$ everywhere near $\partial D^\bullet$ (only arriving at case (b) of Assumption~\ref{assump:technical}), then we only need to consider the inequality $\DLambda_C^- \geq
		\Lambda_{\lambda,\mu}-\Lambda_{\lambda_0,\mu_0}$. 
	\end{itemize}
\end{rem}

\subsection{Inner approach}

In this section we consider the inner approach of testing if a ball $B$ is contained in $D$, which is only applicable to perturbations that are \emph{either} positive or negative throughout the inclusions. We need the following much stronger assumptions:
\begin{assumption} \label{assump:inner}
	Assume that either of the following two conditions holds:
	\begin{enumerate}[\rm(a)]
		\item \emph{Positive inclusions}: $\mu-\mu_0\geq c$ in $D$ for some known $c>0$.
		\item \emph{Negative inclusions}: $\mu-\mu_0\leq -c$ in $D$ for some known $c>0$.
	\end{enumerate}
\end{assumption}
We define the following test operators (different from the previous ones), for a measurable set $B\subseteq\Omega$,
\begin{align*}
\DhLambda_B^+ &= \DLambda_{\lambda_0,\mu_0}\big[\tfrac{\lambda_0}{\alpha_\lambda}(\alpha_\lambda-\lambda_0)\chi_\Omega,c\tfrac{\mu_0}{\beta_\mu}\chi_B \big], \\[1mm]
\DhLambda_B^- &= \DLambda_{\lambda_0,\mu_0}[(\beta_\lambda-\lambda_0)\chi_\Omega,-c\chi_B].
\end{align*}
Again, the perturbation in the $\lambda$-direction in all of $\Omega$ is to ensure that the ``easy'' direction of the proof works for any unknown $\lambda$, and this contribution is not seen in the proof of the ``difficult'' direction when using the divergence-free sequence of localized solutions.

\begin{prop} \label{prop:posinc}
	Assume that $\mu$ has \emph{positive inclusions} in $D$, corresponding to case~(a) of Assumption~\ref{assump:inner}. For a measurable set $B \subseteq \Omega$, we have 
	\begin{equation*}
		B\subseteq D \quad \text{implies} \quad \Lambda_{\lambda,\mu}-\Lambda_{\lambda_0,\mu_0} \leq \DhLambda^+_B.
	\end{equation*}
	For open set $B \subseteq \Omega$, we have
	\begin{equation*}
		\Lambda_{\lambda,\mu} - \Lambda_{\lambda_0,\mu_0} \leq \DhLambda^+_B \quad \text{implies} \quad B\subset D^\bullet.
	\end{equation*}
\end{prop}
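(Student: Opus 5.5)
We prove the two implications separately. Both compare quadratic forms on $\mathcal{R}^\perp$. The first uses only Lemma~\ref{lem_monotonicity_ineq1} and the bilinear representation \eqref{bilinear_Frechet} from Lemma~\ref{lem_Frechet}. The second also uses the divergence-free localized solutions of Theorem~\ref{thm_loc}. Throughout, $u^0_g \in \mathcal{H}(\Omega)$ denotes the background solution for $(\lambda_0,\mu_0)$ and traction $g \in \mathcal{R}^\perp$.

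\emph{First implication.} Apply the second inequality of Lemma~\ref{lem_monotonicity_ineq1} with $(\lambda_1,\mu_1)=(\lambda,\mu)$ and $(\lambda_2,\mu_2)=(\lambda_0,\mu_0)$. Changing signs, this gives
\begin{equation*}
\inner{(\Lambda_{\lambda,\mu}-\Lambda_{\lambda_0,\mu_0})g,g} \leq \int_\Omega \tfrac{\lambda_0}{\lambda}(\lambda_0-\lambda)\abs{\nabla\cdot u^0_g}^2 + 2\tfrac{\mu_0}{\mu}(\mu_0-\mu)\abs{\widehat{\nabla}u^0_g}^2\,\di x .
\end{equation*}
Next we bound the two coefficients pointwise.
\begin{itemize}
\item[$\diamond$] The map $\lambda \mapsto \lambda_0^2/\lambda - \lambda_0$ is decreasing, so $\tfrac{\lambda_0}{\lambda}(\lambda_0-\lambda) \leq \tfrac{\lambda_0}{\alpha_\lambda}(\lambda_0-\alpha_\lambda)$.
\item[$\diamond$] Outside $D$ the $\mu$-coefficient vanishes. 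In $D$ we have $\mu-\mu_0\geq c$ and $\mu\leq\beta_\mu$, so $\tfrac{\mu_0}{\mu}(\mu_0-\mu)\leq -c\tfrac{\mu_0}{\beta_\mu}$. Since this is negative on $D$ and $B\subseteq D$, the coefficient is at most $-c\tfrac{\mu_0}{\beta_\mu}\chi_B$ everywhere.
\end{itemize}
Inserting these bounds, the right-hand side becomes $\inner{\DhLambda^+_B g,g}$ by \eqref{bilinear_Frechet}, where the minus sign in \eqref{bilinear_Frechet} accounts for the sign changes. This proves $\Lambda_{\lambda,\mu}-\Lambda_{\lambda_0,\mu_0}\leq\DhLambda^+_B$.

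\emph{Second implication, by contraposition.} Assume $B\not\subset D^\bullet$. Since $B$ is open and $D^\bullet$ is closed, $B\setminus D^\bullet$ is a nonempty open set. Choose a small open ball $D_2$ with $\overline{D_2}\subset B\setminus D^\bullet$. Next choose an open Lipschitz set $D_1$ such that $D^\bullet\subset D_1'\Subset D_1$ for some open $D_1'$, with $D_1\cap D_2=\emptyset$ and $\Omega\setminus\overline{D_1\cup D_2}$ connected. Such a $D_1$ should exist because $D^\bullet\Subset\Omega$ has connected complement and positive distance to $\overline{D_2}$. One takes a thin Lipschitz neighbourhood, e.g. a union of small cubes covering $D^\bullet$, and keeps only cubes that do not disconnect the complement. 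This geometric step is the one I expect to need the most care: one must make sure that thickening $D^\bullet$ does not create enclosed pockets in the complement. If needed, one fills such pockets, provided they avoid $D_2$.

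Assumption~\ref{assump_D1D2} then holds, and Theorem~\ref{thm_loc} gives $g_j\in\mathcal{R}^\perp$ with background solutions $u_j=u^0_{g_j}$ satisfying
\begin{equation*}
\nabla\cdot u_j=0, \qquad \norm{\widehat{\nabla}u_j}_{L^2(D_1')}\to0, \qquad \norm{\widehat{\nabla}u_j}_{L^2(D_2)}\to\infty .
\end{equation*}
Now apply the first inequality of Lemma~\ref{lem_monotonicity_ineq1}, again with $(\lambda_1,\mu_1)=(\lambda,\mu)$ and $(\lambda_2,\mu_2)=(\lambda_0,\mu_0)$. Because $u_j$ is divergence-free, the unknown $\lambda$ drops out completely, and since $D\subseteq D_1'$ we get
\begin{equation*}
\inner{(\Lambda_{\lambda,\mu}-\Lambda_{\lambda_0,\mu_0})g_j,g_j}\geq -2\int_\Omega(\mu-\mu_0)\abs{\widehat{\nabla}u_j}^2\,\di x\geq -2\norm{\mu-\mu_0}_{L^\infty}\norm{\widehat{\nabla}u_j}^2_{L^2(D_1')}\to 0 .
\end{equation*}
On the other side, \eqref{bilinear_Frechet} with $\nabla\cdot u_j=0$ and $D_2\subseteq B$ gives
\begin{equation*}
\inner{\DhLambda^+_B g_j,g_j}=-2c\tfrac{\mu_0}{\beta_\mu}\norm{\widehat{\nabla}u_j}^2_{L^2(B)}\leq-2c\tfrac{\mu_0}{\beta_\mu}\norm{\widehat{\nabla}u_j}^2_{L^2(D_2)}\to-\infty .
\end{equation*}
For large $j$ this contradicts $\Lambda_{\lambda,\mu}-\Lambda_{\lambda_0,\mu_0}\leq\DhLambda^+_B$. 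Hence that inequality implies $B\subset D^\bullet$.
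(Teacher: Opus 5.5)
Your proposal is correct and follows essentially the same route as the paper. The first implication combines the monotonicity inequality with the pointwise coefficient bounds. The second is proved by contraposition, using the divergence-free localized solutions of Theorem~\ref{thm_loc} so that the $\lambda$-terms vanish. Minor differences: you bound $\Lambda_{\lambda,\mu}-\Lambda_{\lambda_0,\mu_0}$ and $\DhLambda^+_B$ separately rather than their difference, and you state the intermediate set $D_1'\Subset D_1$ explicitly. The paper simply asserts the existence of $D_1$ with no more justification than your sketch gives.
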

\begin{proof}
	The proof is given in Section~\ref{sec:proofneginc}.
\end{proof}
\begin{prop} \label{prop:neginc}
	Assume that $\mu$ has \emph{negative inclusions} in $D$, corresponding to case~(b) of Assumption~\ref{assump:inner}. For a measurable set $B \subseteq \Omega$, we have 
	\begin{equation*}
		B\subseteq D \quad \text{implies} \quad \DhLambda^-_B \leq \Lambda_{\lambda,\mu}-\Lambda_{\lambda_0,\mu_0}.
	\end{equation*}
	For open set $B \subseteq \Omega$, we have
	\begin{equation*}
		\DhLambda^-_B \leq \Lambda_{\lambda,\mu}-\Lambda_{\lambda_0,\mu_0} \quad \text{implies} \quad B\subset D^\bullet.
	\end{equation*}
\end{prop}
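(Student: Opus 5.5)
Both implications are obtained by testing the operator inequalities against $g\in\mathcal R^\perp$. We use Lemma~\ref{lem_monotonicity_ineq1} with $(\lambda_1,\mu_1)=(\lambda,\mu)$ and $(\lambda_2,\mu_2)=(\lambda_0,\mu_0)$, and the bilinear representation \eqref{bilinear_Frechet} of $\DLambda_{\lambda_0,\mu_0}$. Write $u_0\in\mathcal H(\Omega)$ for the background solution with traction $g$.

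\emph{Easy direction.} Suppose $B\subseteq D$ and $\mu-\mu_0\le -c$ in $D$. The first inequality of Lemma~\ref{lem_monotonicity_ineq1} is applied with the roles of the two parameter pairs swapped, i.e.\ with $u_2$ there being the unknown-parameter solution. That is not directly of the form we need, so we instead use the second inequality with $(\lambda_2,\mu_2)=(\lambda_0,\mu_0)$ and $(\lambda_1,\mu_1)=(\lambda,\mu)$, which gives
\begin{equation*}
\inner{(\Lambda_{\lambda_0,\mu_0}-\Lambda_{\lambda,\mu})g,g}\le \int_\Omega(\lambda-\lambda_0)\abs{\nabla\cdot u_0}^2+2(\mu-\mu_0)\abs{\widehat\nabla u_0}^2\,\di x .
\end{equation*}
Here $u_0$ appears on the right-hand side. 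The relevant inequality is the first one of Lemma~\ref{lem_monotonicity_ineq1} with index $2$ corresponding to the background. After negation this reads
\begin{equation*}
\inner{(\Lambda_{\lambda,\mu}-\Lambda_{\lambda_0,\mu_0})g,g}\ge -\int_\Omega(\lambda-\lambda_0)\abs{\nabla\cdot u_0}^2+2(\mu-\mu_0)\abs{\widehat\nabla u_0}^2\,\di x .
\end{equation*}
We then estimate $\lambda-\lambda_0\le\beta_\lambda-\lambda_0$ on $\Omega$. For the shear term, $\mu-\mu_0\le -c\chi_B$, using that $\mu-\mu_0\le -c$ on $B\subseteq D$ and $\mu-\mu_0=0$ outside $D$. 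The right-hand side is therefore bounded below by
\begin{equation*}
-\int_\Omega(\beta_\lambda-\lambda_0)\abs{\nabla\cdot u_0}^2-2c\chi_B\abs{\widehat\nabla u_0}^2\,\di x=\inner{\DhLambda_B^-g,g},
\end{equation*}
which follows from \eqref{bilinear_Frechet}. This proves the first implication.

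\emph{Hard direction.} Assume $\DhLambda^-_B\le\Lambda_{\lambda,\mu}-\Lambda_{\lambda_0,\mu_0}$ with $B$ open, and suppose for contradiction that $B\not\subset D^\bullet$. Since $D^\bullet$ is closed, there is a small ball $D_2\Subset B\setminus D^\bullet$ with smooth boundary. Take $D_1$ to be an open Lipschitz neighbourhood of $D^\bullet$ with $D'_1\Subset D_1$ still containing $D$, chosen so that $D_1\cap D_2=\emptyset$ and $\Omega\setminus\overline{D_1\cup D_2}$ is connected. This choice is possible because $\Omega\setminus D^\bullet$ is connected and $D^\bullet\Subset\Omega$ (as $D\Subset\Omega$). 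Constructing this neighbourhood is the delicate geometric step: we thicken $D^\bullet$ slightly, e.g.\ by a union of finitely many small cubes, while keeping the complement connected and avoiding the ball.

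Theorem~\ref{thm_loc} then supplies $g_j$ and divergence-free $u_j=u_0$-type solutions. We combine the assumed inequality with the upper bound from Lemma~\ref{lem_monotonicity_ineq1}, namely the second inequality with $u_2$ being the background solution:
\begin{equation*}
\inner{(\Lambda_{\lambda,\mu}-\Lambda_{\lambda_0,\mu_0})g,g}\le\int_\Omega\tfrac{\lambda_0}{\lambda}(\lambda_0-\lambda)\abs{\nabla\cdot u_0}^2+2\tfrac{\mu_0}{\mu}(\mu_0-\mu)\abs{\widehat\nabla u_0}^2\,\di x .
\end{equation*}
This follows from the second inequality with $(\lambda_1,\mu_1)=(\lambda_0,\mu_0)$ and $(\lambda_2,\mu_2)=(\lambda,\mu)$ after swapping signs; a version involving $u_0$ is what is needed, and if necessary we obtain it by the standard variational argument of \cite{EH21}.

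With $g=g_j$, all $\lambda$-terms vanish because $\nabla\cdot u_j=0$. The inequality then yields
\begin{equation*}
2c\int_B\abs{\widehat\nabla u_j}^2\le 2\int_D\tfrac{\mu_0}{\mu}\abs{\mu_0-\mu}\abs{\widehat\nabla u_j}^2\le C\norm{\widehat\nabla u_j}^2_{L^2(D'_1)}\to0,
\end{equation*}
whereas the left-hand side is at least $2c\norm{\widehat\nabla u_j}^2_{L^2(D_2)}\to\infty$. This is a contradiction, so $B\subset D^\bullet$.

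The main obstacle is getting the correct one-sided monotonicity bound expressed in the \emph{background} solution $u_0$, so that the divergence-free localized fields kill the unknown $\lambda$-contributions. The bound in Lemma~\ref{lem_monotonicity_ineq1} with $u_2$ as the background solution, together with the ratio $\mu_0/\mu\le\mu_0/\alpha_\mu$, provides exactly this.
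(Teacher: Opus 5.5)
Your proof is correct and is essentially the paper's argument. It uses the same two monotonicity bounds expressed in the background solution $u_0$, and the same localization via Theorem~\ref{thm_loc} with $D\Subset D_1$ and $D_2\Subset B\setminus D^\bullet$, where divergence-freeness removes every $\lambda$-term; you phrase the contradiction as $2c\int_{D_2}\abs{\widehat\nabla u_j}^2\,\di x\le C\norm{\widehat\nabla u_j}^2_{L^2(D_1')}$ rather than as the limit $-\infty$, which is equivalent.

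The only blemish is the verbal index bookkeeping for Lemma~\ref{lem_monotonicity_ineq1}. Your easy-direction display is the \emph{first} inequality with $(\lambda_2,\mu_2)=(\lambda_0,\mu_0)$. Your hard-direction display is the second inequality with $(\lambda_1,\mu_1)=(\lambda,\mu)$ and $(\lambda_2,\mu_2)=(\lambda_0,\mu_0)$, not the swapped assignment you state. The displayed inequalities themselves are correct.
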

\begin{proof}
	The proof is given in Section~\ref{sec:proofneginc}.
\end{proof}
\begin{rem}
	For positive inclusions, Proposition~\ref{prop:posinc} gives
	\begin{equation*}
		D^\circ \subseteq \cup\,\{\, B\subseteq\Omega \text{ open ball} : \Lambda_{\lambda,\mu} - \Lambda_{\lambda_0,\mu_0} \leq \DhLambda^+_B \,\} \subset D^\bullet,
	\end{equation*}
	and for negative inclusions, Proposition~\ref{prop:neginc} gives
	\begin{equation*}
		D^\circ \subseteq \cup\,\{\, B\subseteq\Omega \text{ open ball} : \DhLambda^-_B \leq \Lambda_{\lambda,\mu}-\Lambda_{\lambda_0,\mu_0} \,\} \subset D^\bullet,
	\end{equation*}
	where $D^\circ$ is the interior of the set $D$.
\end{rem}

\section{Divergence-free fields and localization} \label{sec:divergence_free}

In this section, we consider the pure traction problem with constant Lam\'e parameters. Section~\ref{sec_div_free_fields} first constructs a family of divergence-free $\mathcal H(\Omega)$-solutions by resorting to harmonic vector fields. Section~\ref{sec_loc_for_div_free} then studies localizing the energy of these solutions to subsets of $\Omega$. Finally, this leads to a proof of Theorem~\ref{thm_loc} in Section~\ref{sec:proofthmloc}.

\subsection{Divergence-free fields} \label{sec_div_free_fields} 

Consider the PDE with constant Lam\'e parameters $\lambda_0$ and $\mu_0$, 
\begin{equation} \label{eq:const_coef}
	\nabla \cdot \big(\lambda_0 (\nabla \cdot u) I + 2 \mu_0 \widehat{\nabla} u  \big) = 0 \text{ in } \Omega,
\end{equation}
without yet paying attention to the traction boundary condition. Our goal is to construct such divergence-free solutions to \eqref{eq:const_coef} that they form an infinite-dimensional subspace of $\mathcal{H}(\Omega)$. That is, we are looking for $u$ that satisfy 
\begin{equation} \label{eq:div_free}
	\nabla \cdot u = 0 \text{ in } \Omega
\end{equation}
in addition to \eqref{eq:const_coef}.

Since the Lam\'e parameters are assumed to be constants, a direct calculation verifies that \eqref{eq:const_coef} reduces to
\begin{equation*}
	\mu_0 \Delta u + (\lambda_0 + \mu_0) \nabla \bigl(\nabla \cdot u\bigr) = 0  \text{ in } \Omega.
\end{equation*}
Since 
\begin{equation*}
	\Delta u  = \nabla \bigl(\nabla \cdot u\bigr) - \nabla \times \bigl(\nabla \times u\bigr), 
\end{equation*}
the conditions \eqref{eq:const_coef} and \eqref{eq:div_free} are equivalent to requiring that
\begin{equation} \label{eq_curlcurl}
	\nabla \times \bigl(\nabla \times u\bigr) = 0 \quad \text{and} \quad \nabla \cdot u = 0.
\end{equation}
Obviously, any curl- and divergence-free field satisfies \eqref{eq_curlcurl}, which gives the motivation to look for suitable divergence-free solutions to \eqref{eq:const_coef} as gradients of harmonic potentials. To this end, consider $f \in H^{3/2}(\partial \Omega)$ and note that the Dirichlet problem
\begin{equation} \label{eq_laplace}
	\begin{split}
		\Delta \varphi &= 0  \text{ in } \Omega \\
		\varphi &= f  \text{ on } \partial\Omega,
	\end{split}
\end{equation}
has a unique solution in $H^2(\Omega)$ since $\partial \Omega$ is assumed to be of class $C^{1,1}$~\cite{Grisvard85}.

\begin{lem} \label{lem_Laplace_to_elastostatic}
	Assume $\lambda_0$ and $\mu_0$ are positive constants and let $\varphi \in H^2(\Omega)$ be the solution to \eqref{eq_laplace} with $f \in H^{3/2}(\partial \Omega)$. Then, $u = \nabla \varphi \in H^1(\Omega)^3$ satisfies
	\begin{equation} \label{eq_elastostatic_classic}
		\nabla \cdot \big(\lambda_0 (\nabla \cdot u) I + 2 \mu_0 \widehat{\nabla} u  \big) = 0 \quad \text{and} \quad \nabla \cdot u =0
	\end{equation}
	in $\Omega$. Furthermore, $\gamma_{\mathbf C} u \in \mathcal{R}^\perp$.
\end{lem}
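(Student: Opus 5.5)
The plan is to verify the interior identities directly from the structure of $u=\nabla\varphi$, and then to deduce $\gamma_{\C}u\in\mathcal{R}^\perp$ from the weak form of the traction via integration by parts.

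\textbf{Interior equations.} Since $\varphi\in H^2(\Omega)$, the field $u=\nabla\varphi$ lies in $H^1(\Omega)^3$. Its divergence is $\nabla\cdot u=\Delta\varphi=0$ in $\Omega$, first in the distributional sense and hence as an $L^2$ function. Because $\nabla u$ is the Hessian of $\varphi$, it is symmetric, so $\widehat{\nabla}u=\nabla u=\nabla^2\varphi$.

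It then remains to check, in the weak sense, that $\nabla\cdot(2\mu_0\nabla^2\varphi)=0$. Componentwise this reads $\partial_j\partial_j\partial_i\varphi=\partial_i\Delta\varphi=0$. I will justify it distributionally: for $\psi\in C_c^\infty(\Omega)^3$,
\[
\int_\Omega \nabla^2\varphi:\nabla\psi\,\di x=-\int_\Omega \nabla\varphi\cdot\Delta\psi\,\di x=\int_\Omega \varphi\,\Delta(\nabla\cdot\psi)\,\di x ,
\]
after moving derivatives onto $\psi$. The last integral vanishes because $\varphi$ is harmonic; by Weyl's lemma $\varphi$ is smooth in $\Omega$. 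In fact interior smoothness of $\varphi$ makes both identities in \eqref{eq_elastostatic_classic} hold classically inside $\Omega$.

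\textbf{Traction in $\mathcal{R}^\perp$.} Following the footnote's definition, $\gamma_{\C}u$ is characterized by
\[
\inner{\gamma_{\C}u,v|_{\p\Omega}}=B_{\lambda_0,\mu_0}(u,v) \quad\text{for all } v\in H^1(\Omega)^3.
\]
To make sense of this I need $\gamma_{\C}u\in L^2(\p\Omega)^3$, and concretely $\gamma_{\C}u=2\mu_0(\nabla^2\varphi)\nu$. This requires a boundary trace of $\nabla^2\varphi$, which $H^2$ regularity alone does not give. The cleanest route is to interpret $\gamma_{\C}u$ in the dual sense, as an element of $H^{-1/2}(\p\Omega)^3$ defined by the Green formula. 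That formula is valid because $\C\widehat{\nabla}u\in L^2$ and its divergence vanishes, i.e.\ it lies in $H(\mathrm{div})$ row by row. Alternatively one can upgrade the regularity of $f$ if necessary. In either interpretation the key identity is Green's formula for divergence-free $H(\mathrm{div})$ fields:
\[
\inner{\gamma_{\C}u,r}=\int_\Omega 2\mu_0\,\widehat{\nabla}u:\widehat{\nabla}r+\lambda_0(\nabla\cdot u)(\nabla\cdot r)\,\di x .
\]
For $r\in\mathcal{R}$ we have $\widehat{\nabla}r=0$ (since $\nabla r=B$ is antisymmetric) and $\nabla\cdot r=\Tr B=0$, so the right-hand side is zero. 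Hence $\gamma_{\C}u\perp\mathcal{R}$.

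\textbf{Main obstacle.} I expect the only real difficulty to be the regularity and meaning of the traction on the boundary. The algebra in both steps is trivial. I would address this by stating that $\gamma_{\C}u$ is understood through the weak Green identity above, noting that the equality holds for all $v\in H^1(\Omega)^3$ by density of smooth functions together with $\nabla\cdot(\C\widehat{\nabla}u)=0$. The conclusion then follows immediately by taking $v=r\in\mathcal{R}\subset H^1(\Omega)^3$.
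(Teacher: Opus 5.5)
Your proof is correct and follows the paper's route. The interior equations come from $\nabla\cdot u=\Delta\varphi=0$ together with $\widehat{\nabla}u=\nabla^2\varphi$; the paper phrases this through the curl--curl reduction \eqref{eq_curlcurl}. Then $\gamma_{\C}u\perp\mathcal{R}$ follows from the integrated-by-parts identity tested with rigid motions, which is the same observation behind the ``only if'' part of Proposition~\ref{prop_wellposedness} that the paper cites. You also note that for $\varphi\in H^2(\Omega)$ the traction $2\mu_0(\nabla^2\varphi)\nu$ is a priori only in $H^{-1/2}(\partial\Omega)^3$, so the orthogonality must be read in the duality sense; this is a legitimate refinement that the paper's argument and footnote pass over.
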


\begin{proof}
    As a harmonic potential, $u \in H^1(\Omega)^3$ solves \eqref{eq_curlcurl}, and thus it also satisfies~\eqref{eq_elastostatic_classic}. Furthermore, multiplying the first equation in \eqref{eq_elastostatic_classic} by an arbitrary $v \in H^1(\Omega)^3$ and integrating by parts reveals that
    \begin{align*}  
		\int_\Omega \lambda_0 (\nabla \cdot u) (\nabla \cdot v) + 2 \mu_0 \widehat{\nabla} u :\widehat{\nabla} v \, {\rm d}x = \int_{ \partial\Omega} \gamma_{\mathbf C} u \cdot v \,{\rm d}S 
	\end{align*}
	for all $v \in H^1(\Omega)^3$. Thus, according to Proposition~\ref{prop_wellposedness}, it must hold that $\gamma_{\mathbf C} u \in \mathcal{R}^\perp$.
\end{proof}

Lemma \ref{lem_Laplace_to_elastostatic} demonstrates that one can construct a plethora of divergence-free weak $H^1(\Omega)^3$-solutions for constant Lam\'e parameters. However, for our considerations it is convenient to choose the Dirichlet boundary value in \eqref{eq_laplace} so that these solutions lie in $\mathcal{H}(\Omega)$. To this end, define
\begin{equation*} 
	\nu \cdot \mathcal{R} =  \{\, (\nu \cdot r)|_{\partial\Omega}  : r \in \mathcal R \,\}
\end{equation*}
and 
\begin{equation} \label{eq_f_cond}
    (\nu \cdot \mathcal{R})^\perp =  \{ \, f \in L^2(\partial \Omega)  :  \inner{f, \nu \cdot r} = 0 \text{ for all } r \in \mathcal{R} \, \}.
\end{equation}
The following proposition summarizes our entire construction.

\begin{prop} \label{prop_Laplace_to_elastostatic}
	Assume $\lambda_0$ and $\mu_0$ are positive constants and let $\varphi \in H^2(\Omega)$ be the solution to \eqref{eq_laplace} for $f \in H^{3/2}(\partial \Omega) \cap (\nu \cdot \mathcal{R})^\perp$. Then $u = \nabla \varphi$ is a divergence-free weak solution in $\mathcal{H}(\Omega)$ to~\eqref{eq:const_coef}.
\end{prop}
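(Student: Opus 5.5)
By Lemma~\ref{lem_Laplace_to_elastostatic}, $u=\nabla\varphi\in H^1(\Omega)^3$ is divergence-free, solves \eqref{eq:const_coef}, and $\gamma_{\C}u\in\mathcal{R}^\perp$. The integration-by-parts identity in that proof shows that $u$ is a weak solution in the sense of \eqref{eq_weak} with traction $g=\gamma_{\C}u$. The only remaining point is therefore membership in $\mathcal{H}(\Omega)$, that is,
\begin{equation*}
\inner{\nabla\varphi, r}_{L^2(\Omega)^3}=0 \quad\text{for all } r\in\mathcal{R}.
\end{equation*}
I plan to verify this by reducing it to the boundary condition $f\in(\nu\cdot\mathcal{R})^\perp$ via the divergence theorem.

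Fix $r\in\mathcal{R}$, say $r(x)=Bx+\eta$ with $B$ antisymmetric. The key observation is that $\nabla\cdot r=\Tr(B)=0$, since an antisymmetric matrix has zero diagonal. We then have the product rule
\begin{equation*}
\nabla\cdot(\varphi r)=\nabla\varphi\cdot r+\varphi\,\nabla\cdot r=\nabla\varphi\cdot r.
\end{equation*}
This holds almost everywhere because $\varphi\in H^2(\Omega)$ and $r$ is smooth. Applying the divergence theorem, which is valid for the $W^{1,1}$ field $\varphi r$ on the Lipschitz (indeed $C^{1,1}$) domain $\Omega$, gives
\begin{equation*}
\int_\Omega\nabla\varphi\cdot r\,\di x=\int_{\partial\Omega}\varphi\,(\nu\cdot r)\,\di S=\inner{f,\nu\cdot r}.
\end{equation*}
Here we used that the trace of $\varphi$ equals $f$. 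The right-hand side vanishes by \eqref{eq_f_cond}, so $u\in\mathcal{H}(\Omega)$.

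Combining this with uniqueness in Proposition~\ref{prop_wellposedness}, $u$ is exactly the unique $\mathcal{H}(\Omega)$-representative of the weak solution for $g=\gamma_{\C}u\in\mathcal{R}^\perp$. The argument is essentially routine. The only step needing care is justifying the divergence theorem and the trace identity at this regularity level, and $H^2$ regularity makes both standard. The crucial structural fact is that rigid motions are divergence-free, and this is what makes the boundary condition $(\nu\cdot\mathcal{R})^\perp$ exactly equivalent to centering.
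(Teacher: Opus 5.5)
Your proof is correct and follows essentially the same route as the paper: invoke Lemma~\ref{lem_Laplace_to_elastostatic} for the weak-solution and divergence-free properties. Then use Green's formula together with $\nabla\cdot r = 0$ for $r\in\mathcal{R}$ to reduce $\inner{\nabla\varphi, r}_{L^2(\Omega)^3}$ to $\inner{f,\nu\cdot r} = 0$, while your extra remarks on regularity and on identifying $u$ as the unique $\mathcal{H}(\Omega)$-representative via Proposition~\ref{prop_wellposedness} are harmless additions.
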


\begin{proof}
	Lemma~\ref{lem_Laplace_to_elastostatic} (and its proof) guarantees that $u \in H^1(\Omega)^3$ is a divergence-free weak solution, which means that it is enough to prove that $u \in \mathcal{H}(\Omega)$. Let $r = Bx + \eta \in \mathcal R$ be arbitrary (i.e., $B \in \R^{3 \times 3}$ and $\eta \in \R^3$ are, respectively, an arbitrary antisymmetric matrix and an arbitrary vector) and employ Green's formula to write
	\begin{align*}
		\int_\Omega \nabla \varphi \cdot (Bx + \eta) \,{\rm d} x  & = -\int_\Omega \varphi \nabla \cdot (Bx + \eta) \,\di x + \int_{\partial\Omega} (\nu \cdot r) f \, {\rm d} S = \int_{\partial\Omega} (\nu \cdot r) f\,\di S = 0, 
	\end{align*}
	which proves the claim.
\end{proof}

It is worth making two remarks related to the arguments in this section:

\begin{rem}
	The space of admissible Dirichlet boundary values \eqref{eq_f_cond} for finding solutions $\varphi$ to \eqref{eq_laplace} such that $u = \nabla \varphi \in \mathcal{H}(\Omega)$ is just the orthogonal complement of a finite-dimensional subspace in $L^2(\partial \Omega)$. Hence, we can employ a similar argument as in~\cite{HPS19b} for constructing localized solutions. See Section~\ref{sec_loc_for_div_free} below for the details. 
\end{rem}

\begin{rem} \label{rem_dirichlet}
	Let us briefly discuss the motivation for using the space of centered displacements $\mathcal H(\Omega)$ for achieving well-posedness. If we instead resorted to a homogeneous Dirichlet condition
	\begin{equation} \label{eq:homog_dirichlet}
		u|_{\Gamma_{\textup{D}}} = 0 
	\end{equation}
	on a non-empty $\Gamma_{\textup{D}} \subset \partial\Omega$ as in,~e.g.,~\cite{EGH25,EH21,EP26}, then the potential $\varphi$ of Lemma~\ref{lem_Laplace_to_elastostatic} would have to satisfy the additional condition 
	\begin{equation*}
		\nabla\varphi|_{\Gamma_{\textup{D}}} = 0.
	\end{equation*}
	Because both the tangential and normal derivatives of $\varphi$ would then vanish on $\Gamma_{\textup{D}}$, the harmonic function $\varphi$ would solve a Cauchy problem in $\Omega$ with constant Dirichlet and vanishing Neumann boundary values on $\Gamma_{\textup{D}}$, and it would thus have to be constant in $\Omega$ due to the principle of unique continuation. This means that under the condition \eqref{eq:homog_dirichlet}, the above construction would only guarantee the existence of the trivial divergence-free solution $u \equiv 0$ to \eqref{eq:const_coef}.
\end{rem}

\subsection{Localization for divergence-free fields} \label{sec_loc_for_div_free}

Next, we construct divergence-free solutions to \eqref{eq:const_coef} that localize in a given subset of $\Omega$. To this end, assume that $\D \subseteq \Omega$ is compact in the relative topology and has a Lipschitz boundary. We employ a Runge-type argument, which is similar to that in \cite{HPS19b}. 

According to Proposition~\ref{prop_Laplace_to_elastostatic}, gradients of functions in the space 
\begin{equation*}
	\mathcal{S} = \big\{\, \varphi \in  H^2(\D) : \varphi \text{ satisfies  \eqref{eq_laplace} for some } f \in H^{3/2}(\partial \Omega) \cap (\nu \cdot \mathcal{R})^\perp  \,\big\}
\end{equation*}
are divergence-free fields in $\mathcal{H}(\Omega)$ and satisfy \eqref{eq:const_coef}. We are particularly interested in the restrictions of these functions to the subset $\D$ and thus define
\begin{equation*}
	\mathcal{S}|_{\D} = \{\, \varphi|_{\D} : \varphi \in \mathcal{S} \,\}.
\end{equation*}
Our preliminary aim is to characterize $\overline{\mathcal{S}|_{\D}}$, where the closure is taken in the topology of $L^2(\D)$. This will indicate the functions that can be approximated in $H^{-1}(\D)^3$ by divergence-free solutions to \eqref{eq:const_coef}.

We characterize the closure $\overline{\mathcal{S}|_{\D}}$ in terms of the subspace
\begin{equation*} 
	\mathcal F = \{\, F \in L^2(\D) : \partial_\nu (S F)|_{\partial\Omega} \in  \nu \cdot \mathcal{R} \,\},
\end{equation*}
where $S \colon F \mapsto \varphi$ is the solution operator for the problem
\begin{equation} \label{eq_loc_source}
	\begin{split}
		\Delta \varphi &= F  \text{ in } \Omega \\
		\varphi &= 0  \text{ on } \partial\Omega.
	\end{split}
\end{equation}
Here and in what follows, $L^2(\D)$ is interpreted as a subspace of $L^2(\Omega)$ via zero-continuation. 

Now let
\begin{equation*} 
	\mathcal{S}_{\D} = \{\, \phi \in H^2(\D) : \Delta \phi = 0 \text{ and }  \phi \perp \mathcal F \,\},
\end{equation*} 
where the orthogonality is in the sense of $L^2(\D)$.

\begin{lem} \label{lem_S_D}
	It holds that $\overline{\mathcal{S}|_{\D}} = \mathcal{S}_{\D}$.
\end{lem}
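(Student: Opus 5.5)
We prove the two inclusions $\overline{\mathcal{S}|_{\D}} \subseteq \mathcal{S}_{\D}$ and $\mathcal{S}_{\D} \subseteq \overline{\mathcal{S}|_{\D}}$ separately. The second is a Hahn--Banach (duality) argument in $L^2(\D)$. Here $\mathcal{S}_{\D}$ should be read as the $L^2(\D)$-closed space of functions in $\D$ that are harmonic in the interior and orthogonal to $\mathcal F$. For the Hahn--Banach step it is convenient to use the ambient Hilbert space $\overline{\{\phi\in H^2(\D):\Delta\phi=0\}}$, i.e.\ the $L^2$-closure of harmonic functions on $\D$, so that harmonicity passes to limits.

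\emph{First inclusion.} Let $\varphi\in\mathcal S$ with Dirichlet datum $f\in H^{3/2}(\partial\Omega)\cap(\nu\cdot\mathcal R)^\perp$, and let $F\in\mathcal F$. Green's formula gives
\[
\int_\D \varphi F\,\di x=\int_\Omega \varphi\,\Delta(SF)\,\di x=\int_{\partial\Omega} f\,\partial_\nu(SF)\,\di S - \int_{\partial\Omega} (SF)\,\partial_\nu\varphi\,\di S .
\]
The last term vanishes since $SF=0$ on $\partial\Omega$. The first term vanishes since $\partial_\nu(SF)|_{\partial\Omega}\in\nu\cdot\mathcal R$ and $f\perp \nu\cdot\mathcal R$. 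The regularity needed to justify this is $SF\in H^2(\Omega)$, which holds because $\partial\Omega$ is $C^{1,1}$. Hence $\varphi|_\D\perp\mathcal F$, and $\varphi|_\D$ is harmonic in $\D$. Orthogonality to $\mathcal F$ is preserved under $L^2$-limits, and so is harmonicity in the interior (in the distributional sense). This gives $\overline{\mathcal S|_\D}\subseteq\mathcal S_\D$.

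\emph{Second inclusion.} It suffices to show that any $G\in L^2(\D)$ with $G\perp\mathcal S|_\D$ satisfies $G\perp\mathcal S_\D$. Set $w=SG\in H^2(\Omega)$. For $\varphi\in\mathcal S$ with datum $f$, the same Green identity yields
\[
0=\int_\D G\varphi\,\di x=\int_{\partial\Omega} f\,\partial_\nu w\,\di S .
\]
This holds for every $f\in H^{3/2}(\partial\Omega)\cap(\nu\cdot\mathcal R)^\perp$, which is dense in $(\nu\cdot\mathcal R)^\perp$ because $\nu\cdot\mathcal R$ is finite-dimensional and consists of Lipschitz functions. Therefore $\partial_\nu w|_{\partial\Omega}\in\big((\nu\cdot\mathcal R)^\perp\big)^\perp=\nu\cdot\mathcal R$, that is, $G\in\mathcal F$. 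Since every $\phi\in\mathcal S_\D$ is orthogonal to $\mathcal F$, we get $\langle G,\phi\rangle=0$. Thus $\mathcal S_\D\subseteq\overline{\mathcal S|_\D}$.

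\emph{Main obstacle.} The delicate point is justifying the Green identities with the available regularity. We need $f\in H^{3/2}$ and $\varphi\in H^2(\Omega)$, and $w\in H^2(\Omega)$ via $C^{1,1}$ elliptic regularity, so the traces $\partial_\nu w\in H^{1/2}(\partial\Omega)$ make sense. We also need the density argument for the constrained data space. Finally, the interpretation of $\mathcal S_\D$ as a closed subspace must be fixed consistently: if $\mathcal S_\D$ is read literally with $H^2$ regularity, then "$=$" should be understood with $\mathcal S_\D$ replaced by its $L^2$-closure. The duality argument above works verbatim in that closed space, using that $L^2$-limits of harmonic functions are harmonic in the interior of $\D$.
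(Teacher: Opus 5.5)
Your proof is correct, and your second inclusion is exactly the paper's argument: for $G\perp\mathcal{S}|_{\D}$, Green's formula with $w=SG$ and a density step give $\partial_\nu w|_{\partial\Omega}\in(\nu\cdot\mathcal{R})^{\perp\perp}=\nu\cdot\mathcal{R}$, hence $G\in\mathcal{F}\subseteq\mathcal{S}_{\D}^{\perp}$. (The density of $H^{3/2}(\partial\Omega)\cap(\nu\cdot\mathcal{R})^\perp$ in $(\nu\cdot\mathcal{R})^\perp$ follows from the finite-dimensionality of $\nu\cdot\mathcal{R}$ alone; the Lipschitz property plays no role.) The paper proves only this inclusion, the one used later, so your first inclusion is a sound addition, as is your correct observation that the stated equality requires reading $\mathcal{S}_{\D}$ as an $L^2(\D)$-closed space, since the $H^2(\D)$ requirement is not preserved under $L^2$-limits.
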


\begin{proof}
	We only prove that $\mathcal{S}_{\D} \subseteq \overline{\mathcal{S}|_{\D}}$, since the opposite direction is not needed in what follows and it could be proven by a similar argument. More precisely, we aim to show that $(\mathcal{S}|_{\D})^{\perp} \subseteq \mathcal{S}_{\D}^\perp$, which proves the assertion since then $\mathcal{S}_{\D} \subseteq \mathcal{S}_{\D}^{\perp \perp} \subseteq (\mathcal{S}|_{\D})^{\perp \perp} = \overline{ \mathcal{S}|_{\D}}$.
	
	Let $\psi \in (\mathcal{S}|_{\D})^\perp$ and define $\phi \in H^2(\Omega)$ as the solution to
	\begin{align*} 
		\Delta\phi &= \chi_{\D} \psi \text{ in } \Omega\\
		\phi &= 0 \text{ on } \partial\Omega,
	\end{align*}
	where $\chi_{\D}$ denotes the characteristic function of $\D$. Then, for all $\varphi \in \mathcal{S}|_{\D}$,
	\begin{align*}
		0 = \inner{\varphi,\psi}_{L^2(\D)} = \inner{\varphi,\chi_{\D} \psi}_{L^2(\Omega)} = \inner{\varphi,\Delta \phi}_{L^2(\Omega)} = \inner{\partial_\nu \phi, \varphi}_{L^2(\partial\Omega)}.
	\end{align*}
	Since $\phi = S\psi$ and  $\varphi|_{\partial\Omega}$ can be any element of $H^{3/2}(\partial \Omega) \cap (\nu \cdot \mathcal{R})^\perp$, it follows from the density of the embedding $H^{3/2}(\partial \Omega) \hookrightarrow L^2(\partial \Omega)$ that 
	\begin{equation*}
		\partial_\nu S(\psi)|_{\partial\Omega} \in (\nu \cdot \mathcal{R})^{\perp \perp} = \nu \cdot \mathcal{R}.
	\end{equation*}
	Thus, $\psi \in \mathcal F$, which shows that $\psi \perp \mathcal{S}_{\D}$.
\end{proof}

Our objective is to localize a sequence of divergence-free solutions of \eqref{eq:const_coef} so that they are asymptotically small in some set $D_1$ and large in another set $D_2$. 

\begin{lem} \label{lem_blowupCand}
	Let $D_1$ and $D_2$ satisfy Assumption~\ref{assump_D1D2} and define $\D = D_1 \cup D_2$. There exists $\phi \in \mathcal{S}_{\D}$ such that 
	\begin{equation*}
		\phi \equiv 0 \text{ in } D_1, \quad \norm{\phi}_{ L^2(D_2)} \neq 0, \quad\text{and}\quad \norm{\widehat{\nabla} \nabla \phi}_{L^2(D_2)^{3\times3}} \neq 0. 
	\end{equation*}
\end{lem}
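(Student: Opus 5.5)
The plan is to reduce the orthogonality constraint $\phi \perp \mathcal F$ to finitely many linear conditions. We then satisfy them with a harmonic polynomial placed on $D_2$, taking $\phi \equiv 0$ on $D_1$.

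\emph{Step 1: a large part of $\mathcal F$ is harmless.} Let
$$\mathcal F_0 = \{\, F \in L^2(\D) : \partial_\nu (SF)|_{\partial\Omega} = 0 \,\} \subseteq \mathcal F .$$
The map $F \mapsto \partial_\nu(SF)|_{\partial\Omega}$ is linear and sends $\mathcal F$ into $\nu\cdot\mathcal R$, whose dimension is at most $6$. Its kernel on $\mathcal F$ is exactly $\mathcal F_0$. Hence $\mathcal F = \mathcal F_0 \oplus \Span\{F_1,\dots,F_m\}$ for some $m \le 6$.

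I claim every harmonic $\phi \in H^2(\D)$ is orthogonal to $\mathcal F_0$. Take $F \in \mathcal F_0$ and set $w = SF \in H^2(\Omega)$. Then $w$ is harmonic in $\Omega\setminus\overline{\D}$, and both $w$ and $\partial_\nu w$ vanish on $\partial\Omega$. Since $\Omega\setminus\overline{\D}$ is connected and adjacent to $\partial\Omega$ (Assumption~\ref{assump_D1D2}), unique continuation for the Cauchy problem gives $w \equiv 0$ in $\Omega\setminus\overline{\D}$. So $w \in H^2(\Omega)$ vanishes outside $\overline{\D}$.

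Because $\partial\D$ is Lipschitz, the traces of $w$ and $\nabla w$ from inside $\D$ vanish, so $w|_{\D} \in H^2_0(\D)$. Two integrations by parts in $\D$ then give
$$\inner{\phi, F}_{L^2(\D)} = \inner{\phi, \Delta w}_{L^2(\D)} = \inner{\Delta\phi, w}_{L^2(\D)} = 0 .$$
I expect this step to be the main technical point: justifying unique continuation up to a Lipschitz interface and the $H^2_0$ membership of $w|_{\D}$. I would handle it by approximating $w|_{\D}$ in $H^2$ by $C_c^\infty(\D)$ functions, which is standard for Lipschitz domains.

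\emph{Step 2: finite-dimensional linear algebra.} By Step 1, a harmonic $\phi \in H^2(\D)$ lies in $\mathcal S_{\D}$ if and only if $\inner{\phi, F_k}_{L^2(\D)} = 0$ for $k=1,\dots,m$. Define $\phi = 0$ on $D_1$ and $\phi = p$ on $D_2$, where $p$ is a harmonic polynomial. Since $D_1$ and $D_2$ are disjoint open sets, $\phi$ is harmonic in $\D$ and clearly lies in $H^2(\D)$.

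The conditions then become the $m$ linear constraints $\inner{p, F_k}_{L^2(D_2)} = 0$. Choose a finite-dimensional space $V$ of harmonic polynomials with $\dim V \ge m+5$; homogeneous harmonic polynomials of all degrees up to some $N$ suffice. The common kernel of the $m$ functionals in $V$ has dimension at least $5$. Affine functions form a $4$-dimensional space, so the kernel contains a polynomial $p$ that is not affine.

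\emph{Step 3: nondegeneracy on $D_2$.} The set $D_2$ is nonempty and open, and $p$ is real analytic. Since $p$ is not identically zero, $\norm{p}_{L^2(D_2)} \ne 0$. Since $p$ is not affine, its Hessian $\nabla\nabla p$ is a nonzero polynomial matrix, so it does not vanish on the open set $D_2$. The Hessian is symmetric, so $\widehat{\nabla}\nabla\phi = \nabla\nabla p$ on $D_2$, and hence $\norm{\widehat{\nabla}\nabla\phi}_{L^2(D_2)^{3\times3}} \ne 0$. Together with $\phi \equiv 0$ in $D_1$, this gives the lemma.
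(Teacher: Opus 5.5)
Your proof is correct. It reaches the lemma by a genuinely different route from the paper.

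\textbf{The paper's route.} The paper moves all constraints onto $\partial D_2$. With $\Lambda$ the Dirichlet-to-Neumann map of $D_2$, it shows that $\mathcal{X}=\Span\{(\partial_\nu-\Lambda)(SF)|_{\partial D_2} : F\in\mathcal F\}$ is finite-dimensional, using the same unique-continuation fact as your Step 1. On $D_2$ it then takes $\phi$ to be the harmonic extension of a nonzero $\widetilde f\in H^{3/2}(\partial D_2)$ orthogonal to $\mathcal X+(\nu\cdot\mathcal R)|_{\partial D_2}+\Span\{1\}$. Orthogonality to $\mathcal F$ follows from Green's formula on $D_2$ and the self-adjointness of $\Lambda$. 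Nondegeneracy of the Hessian comes from Korn's inequality on $\mathcal H(D_2)$, which is why $\widetilde f$ is also made orthogonal to $\nu\cdot\mathcal R$ and to constants.

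\textbf{Your route.} You split $\mathcal F=\mathcal F_0\oplus\Span\{F_1,\dots,F_m\}$ and show that $\mathcal F_0$ is invisible to every harmonic function on $\D$, because $SF|_{\D}\in H^2_0(\D)$. This leaves at most $6$ linear conditions. A non-affine harmonic polynomial in their common kernel finishes the proof, with nondegeneracy coming from real-analyticity. The $H^2_0$ step you flagged is indeed standard. The zero extension of $SF$ lies in $H^2$ and is supported in $\overline{\D}$, and the segment condition for Lipschitz domains gives the approximation by $C_c^\infty(\D)$ functions.

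\textbf{What each buys.} Your version avoids the Dirichlet-to-Neumann map, Green's formula on $\partial D_2$, the smoothness of $\partial D_2$, and Korn's inequality on $D_2$. It therefore also avoids the tacit connectedness of $D_2$ that Korn requires. It gives the sharper structural fact that $\mathcal S_{\D}$ has codimension at most $6$ among harmonic functions in $H^2(\D)$. The price is the $H^2_0$ characterization on Lipschitz domains, which the paper replaces by trace arguments on the smooth surface $\partial D_2$.

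\textbf{Shared implicit hypothesis.} Both arguments need $\partial D_2$ to face $\Omega\setminus\overline{\D}$, so that $SF$ and $\nabla SF$ vanish on $\partial D_2$ for $F\in\mathcal F_0$. Assumption~\ref{assump_D1D2} alone does not guarantee this. It holds when $\overline{D_1}\cap\overline{D_2}=\emptyset$, as in every application in the paper. It is also what your phrase ``$\partial\D$ is Lipschitz'' relies on, matching the standing assumption at the start of Section~\ref{sec_loc_for_div_free}.
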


\begin{proof}
	To set the stage, let $\Lambda$ be the Dirichlet-to-Neumann map for the Laplacian in $D_2$ and define
	\begin{align*}
		\mathcal{X} = \operatorname{span} \{\, (\partial_\nu  - \Lambda )(S F)|_{\partial D_2} : F \in \mathcal F \,\},
	\end{align*}
	which is a finite-dimensional subspace of $L^{2}(\partial D_2)$, as reasoned in what follows: Since $F \in L^2(\D) \subseteq L^2(\Omega)$, the solution $SF$ to \eqref{eq_loc_source} belongs to $H^2(\Omega)$, meaning that $(S F)|_{\partial D_2} \in H^{3/2}(\p D_2 )$, $\Lambda (S F)|_{\partial D_2} \in H^{1/2}(\partial D_2)$, and  $\partial_\nu (S F)|_{\partial D_2} \in H^{1/2}(\partial D_2)$ by the trace theorem~\cite{Adams75} and standard regularity theory for elliptic PDEs~\cite{Grisvard85}. Hence, $\mathcal{X} \subseteq H^{1/2}(\partial D_2) \subseteq L^2(\partial D_2)$. 
	
	As $\mathcal{X}$ can be given as an image of 
	\begin{equation*}
		\mathcal{Y} = \operatorname{span}  \{\, (S F)|_{\Omega \setminus \overline{\D}} : F \in \mathcal F \,\},
	\end{equation*}
	under a linear mapping, the finite-dimensionality of $\mathcal{X}$ follows from that of $\mathcal{Y}$. Since $\dim (\nu \cdot \mathcal{R}) < \infty$ and due to linearity, the finite-dimensionality of $\mathcal{Y}$ can in turn be established by showing that  $\partial_\nu (SF)|_{\partial \Omega} = 0$ for $F \in L^2(\D)$ only if $(SF)|_{\Omega \setminus \overline{\D}} = 0$. To this end, observe that $\partial_\nu (SF)|_{\partial \Omega} = 0$ implies $SF \in H^2(\Omega)$ satisfies the Laplace equation in $\Omega \setminus \overline{\D}$ with vanishing Cauchy data on $\partial \Omega$.  Hence, $(S F)|_{\Omega \setminus \overline{\D}} = 0$ by the principle of unique continuation.  
	 
	We define 
	\begin{equation*}
		\phi = 
		\begin{dcases}
		0 &\text{ in } D_1, \\
		\widetilde{\phi} &\text{ in } D_2,
		\end{dcases}
	\end{equation*}
	where $\widetilde{\phi}$ solves the boundary value problem
	\begin{align*}
		\Delta\widetilde{\phi} &= 0 \text{ in } D_2 \\
		\widetilde{\phi} &= \widetilde{f} \text{ on } \partial D_2,
	\end{align*}
	with a non-zero Dirichlet value $\widetilde{f} \in H^{3/2}(\partial D_2)$ that is required to satisfy
	\begin{equation} \label{eq:tilde_ortho}
		\widetilde f \perp \bigl(\mathcal{X} + (\nu \cdot \mathcal{R})|_{\partial D_2} + \Span\{1\} \bigr)
	\end{equation}
	in the sense of $L^2(\partial D_2)$. Since $\mathcal{X} + (\nu \cdot \mathcal R)|_{\partial D_2} + \Span\{1\}$ inherits finite-dimensionality from the subspaces composing it, such  a non-zero $\widetilde f$ exists, producing $\phi \neq 0$. To complete the proof, we still need to check that $\phi$ belongs to $\mathcal{S}_{\D}$ and $\widehat{\nabla} \nabla \phi \neq 0$.
	
	As $\Delta \phi = 0$ in $\D$ and $\phi \in H^2(\D)$ due to $\widetilde{f}$ lying in $H^{3/2}(\partial D_2)$, we only need to confirm that $\phi \perp \mathcal{F}$ in the sense of $L^2(\D)$ in order to prove  that  $\phi \in \mathcal{S}_{\D}$. For an arbitrary $F \in \mathcal F$, Green's formula and the self-adjointness of $\Lambda$ yield
	\begin{align*} 
		\inner{\phi, F}_{L^2(\D)} &= \inner{\widetilde{\phi}, F}_{L^2(D_2)} = \inner{\widetilde{\phi} , \Delta SF}_{L^2(D_2)}  \\
		&= \inner{\widetilde{f}, \partial_\nu SF}_{L^2(\partial D_2)}  -  \inner{\Lambda \widetilde{f}, SF}_{L^2(\partial D_2)} \\
		&= \inner{\widetilde{f}, (\partial_\nu - \Lambda)SF}_{L^2(\partial D_2)} = 0,  
	\end{align*}
	where the last step follows from \eqref{eq:tilde_ortho}. Hence, $\phi \in \mathcal{S}_{\D}$.
	
	Since $\widetilde{f} \in H^{3/2}(\partial D_2)$ is orthogonal to $(\nu \cdot \mathcal R)|_{\partial D_2}$, Proposition~\ref{prop_Laplace_to_elastostatic} reveals that $\nabla \phi$ is in~$D_2$ a divergence-free weak $\mathcal{H}(D_2)$-solution to \eqref{eq:const_coef}. In particular, this guarantees that 
	\begin{equation} \label{eq:Korn2}
		\norm{\nabla \phi}_{L^2(D_2)^3} \leq C \norm{\widehat{\nabla} \nabla \phi}_{L^2(D_2)^{3 \times 3}}
	\end{equation}
	by Korn's inequality \eqref{eq_korn2_2}. Finally, as $\widetilde{f}$ is assumed to be non-constant (cf.~\eqref{eq:tilde_ortho}), $\phi$ is also non-constant, and thus the left-hand side of $\eqref{eq:Korn2}$ is positive. This completes the proof.
\end{proof}

By Lemma \ref{lem_blowupCand}, there exists $\phi \in \mathcal{S}_{\D}$ that is localized in $D_2$, and by Lemma \ref{lem_S_D}, it can be approximated by elements in $\mathcal{S}$. By combining these two observations, we get the following localization result.

\begin{prop} \label{prop_localization1}
	Let $D_1$ and $D_2$ satisfy Assumption~\ref{assump_D1D2}. There exists a sequence $(\varphi_j)$ in $\mathcal{S}$ such that 
	\begin{equation*}
		\norm{\varphi_j}_{L^2(D_1)} \to 0 \quad \text{and} \quad \norm{\varphi_j}_{L^2(D_2)} \to \infty
	\end{equation*}
	as $j \to \infty$.
\end{prop}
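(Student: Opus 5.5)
Take $\D = D_1\cup D_2$ and let $\phi\in\mathcal{S}_{\D}$ be the function from Lemma~\ref{lem_blowupCand}, so that $\phi\equiv 0$ in $D_1$ and $\norm{\phi}_{L^2(D_2)} =: c > 0$. By Lemma~\ref{lem_S_D}, $\phi \in \overline{\mathcal{S}|_{\D}}$ with closure in $L^2(\D)$. Hence there is a sequence $(\psi_k)$ in $\mathcal{S}$ with $\norm{\psi_k - \phi}_{L^2(\D)} \to 0$. Splitting the norm over the disjoint sets $D_1$ and $D_2$ gives
\begin{equation*}
	\norm{\psi_k}_{L^2(D_1)} \to 0 \quad\text{and}\quad \norm{\psi_k}_{L^2(D_2)} \to c > 0 .
\end{equation*}

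Next I rescale to separate the two norms. Since $\mathcal{S}$ is a linear space (it is defined through a linear Dirichlet problem with boundary data in the linear space $H^{3/2}(\partial\Omega)\cap(\nu\cdot\mathcal{R})^\perp$), any scalar multiple of $\psi_k$ stays in $\mathcal{S}$. Put $\epsilon_k = \norm{\psi_k}_{L^2(D_1)}$.

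If $\epsilon_k>0$ for infinitely many $k$, pass to that subsequence and set $\varphi_j = \epsilon_{k_j}^{-1/2}\psi_{k_j}$. Then
\begin{equation*}
	\norm{\varphi_j}_{L^2(D_1)} = \epsilon_{k_j}^{1/2}\to 0 \quad\text{and}\quad \norm{\varphi_j}_{L^2(D_2)} \approx c\,\epsilon_{k_j}^{-1/2}\to\infty .
\end{equation*}

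If $\epsilon_k=0$ for some $k$ with $\norm{\psi_k}_{L^2(D_2)}>0$ (true for all large $k$), take $\varphi_j = j\,\psi_k$. Then $\norm{\varphi_j}_{L^2(D_1)}=0$ and $\norm{\varphi_j}_{L^2(D_2)}\to\infty$. A uniform alternative covering both cases is $\varphi_j = \max\{\epsilon_{k_j},1/j\}^{-1/2}\psi_{k_j}$.

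I do not expect a real obstacle, since the hard work is done in Lemmas~\ref{lem_S_D} and~\ref{lem_blowupCand}. The only points to check are the following.
\begin{itemize}
	\item The $L^2(\D)$ closure in Lemma~\ref{lem_S_D} is the topology needed here. It is, because the statement only involves $L^2$ norms.
	\item The $D_1$ part and the $D_2$ part of the approximation error are each controlled by the full $L^2(\D)$ error. This holds because $D_1\cap D_2=\emptyset$.
	\item The nondegeneracy $\norm{\phi}_{L^2(D_2)}\neq0$ from Lemma~\ref{lem_blowupCand} is what keeps the $D_2$ norm away from zero before rescaling.
\end{itemize}
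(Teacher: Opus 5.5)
Your proposal is correct and follows the paper's own route: take the function $\phi$ from Lemma~\ref{lem_blowupCand}, approximate it in $L^2(\D)$ by elements of $\mathcal{S}$ via Lemma~\ref{lem_S_D}, and rescale. The only difference is in the rescaling step. The paper avoids your case distinction by choosing approximants with $\norm{\phi_j-\phi}_{L^2(\D)}\leq 1/j^2$ and setting $\varphi_j = j\phi_j$, which is merely a cosmetic simplification.
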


\begin{proof}
	Set $\D = D_1 \cup D_2$. By Lemma~\ref{lem_blowupCand}, there is a function $\phi \in \mathcal{S}_{\D}$ that satisfies $\norm{\phi}_{L^2(D_1)} = 0$, $\norm{\phi}_{L^2(D_2)} \neq 0$, and $\norm{\widehat{\nabla}\nabla\phi}_{L^2(D_2)} \not= 0$.\footnote{The third property is not needed in this proof, but it is utilized in the proof of Theorem~\ref{thm_loc}.} By Lemma~\ref{lem_S_D}, this $\phi$ can be approximated by a sequence $(\phi_j)$ in $\mathcal{S}$ in the sense that 
	\begin{equation} \label{eq:phi_conv}
		\norm{\phi_j - \phi}_{L^2(\D) } \leq \frac{1}{j^2}.
	\end{equation}
	Defining $\varphi_j = j \phi_j \in\mathcal{S}$, it follows that
	\begin{equation*}
		\norm{\varphi_j - j \phi}_{L^2(\D) } \leq \frac{1}{j}.
	\end{equation*}
	Hence,
	\begin{equation*}
		\norm{\varphi_j}_{L^2(D_1)} \leq \frac{1}{j} \quad \text{and} \quad \norm{\varphi_j}_{L^2(D_2)} \geq j \, \norm{\phi}_{L^2(D_2)} - \frac{1}{j}, 
	\end{equation*}
	which proves the claim.
\end{proof}

The previous proposition shows that we can localize potentials in $\mathcal{S}$ with respect to the $L^2(\Omega)$-norm. However, we are ultimately interested in the localization of the strain of their gradients that, according to Proposition~\ref{prop_Laplace_to_elastostatic}, are divergence-free weak solutions to the constant coefficient equation~\eqref{eq:const_coef}. We are now ready to prove Theorem~\ref{thm_loc}.

\subsection{Proof of Theorem~\ref{thm_loc}} \label{sec:proofthmloc}

Let $\phi \in \mathcal{S}_{\D}$, $\phi_j \in \mathcal{S}$, and $\varphi_j = j \phi_j \in \mathcal{S}$ be as in the proof of Proposition \ref{prop_localization1}. By construction,
\begin{align*}
	\Delta (\phi - \phi_j) = 0 \text{ in } \D.
\end{align*}
Since $D'_1 \Subset D_1$, interior elliptic regularity thus yields
\begin{equation*}
	\norm{\widehat{\nabla}\nabla\phi -\widehat{\nabla}\nabla\phi_j}_{L^2(D'_1)^{3 \times 3}} \leq C \norm{\phi - \phi_j}_{H^2(D'_1)} \leq C' \norm{\phi-\phi_j}_{L^2(D_1)} \leq  \frac{C'}{j^2},
\end{equation*} 
where the last step follows from \eqref{eq:phi_conv}. Since $\phi|_{D_1} = 0$, we get
\begin{equation*}
	\norm{\widehat{\nabla} \nabla \varphi_j}_{L^2(D'_1)} = j \norm{\widehat{\nabla} \nabla \phi_j}_{L^2(D'_1)} \leq \frac{C'}{j},
\end{equation*}
which proves the first part of the claim.

The proof can be completed by showing that $\norm{\widehat{\nabla}\nabla\varphi_j}_{L^2(D'_2)} \to \infty$ as $j \to \infty$ for some subset $D'_2 \subseteq D_2$. We pick such a $D'_2 \Subset D_2$ with 
\begin{equation*}
	\norm{\widehat{\nabla}\nabla\phi}_{L^2(D'_2)} \neq 0,
\end{equation*}
which is possible since $\norm{\widehat{\nabla}\nabla\phi}_{L^2(D_2)} \neq 0$ by construction. As in the first part of the proof, interior elliptic regularity gives
\begin{equation*}
	\norm{\widehat{\nabla}\nabla\phi - \widehat{\nabla}\nabla\phi_j}_{L^2(D'_2)^{3 \times 3}} \leq C \norm{\phi - \phi_j}_{L^2(D_2)} \leq \frac{C''}{j^2}.
\end{equation*}
Hence, by the reverse triangle inequality,
\begin{align*}
	\norm{\widehat{\nabla}\nabla\varphi_j} _{L^2(D'_2)^{3 \times 3}} &\geq j \norm{\widehat{\nabla}\nabla\phi}_{L^2(D'_2)^{3 \times 3}} - j \norm{\widehat{\nabla}\nabla\phi - \widehat{\nabla}\nabla\phi_j}_{L^2(D'_2)^{3 \times 3}} \\ 
	&\geq   j \norm{\widehat{\nabla}\nabla\phi}_{L^2(D'_2)^{3 \times 3}} - \frac{C''}{j}.
\end{align*}
The result follows with $u_j = \nabla\varphi_j$ by Proposition~\ref{prop_Laplace_to_elastostatic}. \hfill $\Box$
	
\section{Proof of Theorem~\ref{thm:outer}} \label{sec:proofthmouter}

In the proof we abbreviate $\Lambda = \Lambda_{\lambda,\mu}$ and $\Lambda_0 = \Lambda_{\lambda_0,\mu_0}$.

\textbf{Proof of ``$\boldsymbol{D\subseteq C \Rightarrow \DLambda_C^- \geq \Lambda - \Lambda_0 \geq \DLambda_C^+}$'':}	Let $u_0$ be the solution corresponding to $\lambda_0$, $\mu_0$ and the boundary traction $g$. Since $D\subseteq C$, Lemmas~\ref{lem_monotonicity_ineq1} and~\ref{lem_Frechet} give
\begin{align*}
	\inner{(\Lambda - \Lambda_0 - \DLambda_C^+)g,g} &\geq \int_\Omega (\lambda_0 - \lambda)\abs{\nabla\cdot u_0}^2\,\di x + \int_D 2(\mu_0-\mu)\abs{\widehat{\nabla}u_0}^2\,\di x \\
	&\hphantom{\geq{}} + (\beta_\lambda-\lambda_0)\int_\Omega\abs{\nabla\cdot u_0}^2\,\di x +2(\beta_\mu-\mu_0)\int_C \abs{\widehat{\nabla}u_0}^2\,\di x \\
	&\geq 2(\beta_\mu-\mu_0)\int_{C\setminus D} \abs{\widehat{\nabla}u_0}^2\,\di x \geq 0.
\end{align*}

Next we consider the other inequality. Note that
\begin{equation*}
	\frac{\lambda_0}{\lambda}(\lambda - \lambda_0) \geq \lambda_0-\frac{\lambda_0^2}{\alpha_\lambda} = \frac{\lambda_0}{\alpha_\lambda}(\alpha_\lambda-\lambda_0) \text{ in } \Omega,
\end{equation*}
and likewise
\begin{equation*}
	\frac{\mu_0}{\mu}(\mu - \mu_0) \geq \frac{\mu_0}{\alpha_\mu}(\alpha_\mu-\mu_0) \text{ in } \Omega.
\end{equation*}
Since $D\subseteq C$, Lemmas~\ref{lem_monotonicity_ineq1} and~\ref{lem_Frechet} give
\begin{align*}
	\inner{(\Lambda_0 - \Lambda + \DLambda_C^-)g,g} &\geq \int_{\Omega} \frac{\lambda_0}{\lambda}(\lambda - \lambda_0)\abs{\nabla\cdot u_0}^2\,\di x + \int_D 2\frac{\mu_0}{\mu}(\mu - \mu_0)\abs{\widehat{\nabla}u_0}^2\,\di x \\
	&\hphantom{\geq{}} + \frac{\lambda_0}{\alpha_\lambda}(\lambda_0-\alpha_\lambda)\int_\Omega \abs{\nabla\cdot u_0}^2\,\di x + 2\frac{\mu_0}{\alpha_\mu}(\mu_0-\alpha_\mu)\int_C\abs{\widehat{\nabla}u_0}^2\,\di x \\
	&\geq 2\frac{\mu_0}{\alpha_\mu}(\mu_0 - \alpha_\mu)\int_{C\setminus D} \abs{\widehat{\nabla}u_0}^2\,\di x \geq 0.
\end{align*}

\textbf{Proof of ``$\boldsymbol{\DLambda_C^- \geq \Lambda - \Lambda_0 \geq \DLambda_C^+ \Rightarrow D\subseteq C}$'':} Assume now that $D\not\subseteq C$. Since $D^\bullet,C\in\mathcal{A}$ we can localize in $D$ and away from $C$. In particular, one of two options holds:
\begin{itemize}
\item[$\diamond$] Case A: Let $V$ and $B$ be as in part (a) of Assumption~\ref{assump:technical} and where $V\cap C = \emptyset$.
\item[$\diamond$] Case B: Let $V$ and $B$ be as in part (b) of Assumption~\ref{assump:technical} and where $V\cap C = \emptyset$.
\end{itemize}
We consider these cases separately. 

As we shall see below, the combination of the two cases proves that $D\not\subseteq C$ implies either $\DLambda_C^- \not\geq \Lambda-\Lambda_0$ or $\Lambda-\Lambda_0 \not\geq \DLambda_C^+$, which is the contrapositive formulation of the statement we are proving. In particular, case A contradicts the second operator inequality, while case B contradicts the first operator inequality. 

For both cases, there exists an open set $D_1$, with $D\setminus V \Subset D_1$ and $C\Subset D_1$, satisfying Assumption~\ref{assump_D1D2} together with $D_2 = B$. We pick $g_j$ and $u_j$ (corresponding to $\lambda_0$ and $\mu_0$) according to Theorem~\ref{thm_loc} such that $u_j$ is divergence-free and satisfies
\begin{equation*}
	\lim_{j\to\infty}\int_B \abs{\widehat{\nabla}u_j}^2\,\di x = \infty \quad \text{and}\quad \lim_{j\to\infty}\int_{D\setminus V} \abs{\widehat{\nabla}u_j}^2\,\di x = \lim_{j\to\infty}\int_{C} \abs{\widehat{\nabla}u_j}^2\,\di x = 0.
\end{equation*}

\textbf{Case A:} By Lemmas~\ref{lem_monotonicity_ineq1} and~\ref{lem_Frechet}, we get
\begin{align*}
	\inner{(\Lambda - \Lambda_0 - \DLambda_C^+)g_j,g_j} &\leq \int_{D} 2\frac{\mu_0}{\mu}(\mu_0-\mu)\abs{\widehat{\nabla}u_j}^2\,\di x + 2(\beta_\mu-\mu_0)\int_C \abs{\widehat{\nabla}u_j}^2\,\di x. 
\end{align*}
The integrals over $C$ and $D\setminus V$ tend to zero. Moreover, in $V$ we have $\mu\geq \mu_0$ and in $B\Subset V$ we have $\inf_B(\mu-\mu_0)>0$. Thus we conclude that
\begin{equation*}
	\lim_{j\to\infty}\inner{(\Lambda - \Lambda_0 - \DLambda_C^+)g_j,g_j} = -\infty,
\end{equation*}
which means that $\Lambda-\Lambda_0 \not\geq \DLambda_C^+$.

\textbf{Case B:} By Lemmas~\ref{lem_monotonicity_ineq1} and~\ref{lem_Frechet}, we get
\begin{align*}
	\inner{(\Lambda_0 - \Lambda + \DLambda_C^-)g_j,g_j} &\leq \int_{D} 2(\mu - \mu_0)\abs{\widehat{\nabla}u_j}^2\,\di x + 2\frac{\mu_0}{\alpha_\mu}(\mu_0-\alpha_\mu)\int_C \abs{\widehat{\nabla}u_j}^2\,\di x.
\end{align*}
The integrals over $C$ and $D\setminus V$ tend to zero. Moreover, in $V$ we have $\mu\leq \mu_0$ and in $B\Subset V$ we have $\sup_B(\mu-\mu_0)<0$. Thus we conclude that
\begin{equation*}
	\lim_{j\to\infty}\inner{(\Lambda_0 - \Lambda + \DLambda_C^-)g_j,g_j} = -\infty,
\end{equation*}
which means that $\DLambda_C^- \not\geq \Lambda-\Lambda_0$.

\section{Proofs of Propositions ~\ref{prop:posinc} and ~\ref{prop:neginc}} \label{sec:proofneginc}

In this section  we use the  abbreviations $\Lambda = \Lambda_{\lambda,\mu}$ and $\Lambda_0 = \Lambda_{\lambda_0,\mu_0}$.

\medskip
\textbf{Proof of Proposition~\ref{prop:posinc}:}
We first prove that $B \subseteq D \Rightarrow \Lambda - \Lambda_0 \leq \DhLambda^+_B$. Let $u_0$ be the solution corresponding to $\lambda_0$, $\mu_0$ and the boundary traction $g$. We make use of 
\begin{equation*}
	\frac{\lambda_0}{\lambda}(\lambda - \lambda_0) \geq \lambda_0-\frac{\lambda_0^2}{\alpha_\lambda} = \frac{\lambda_0}{\alpha_\lambda}(\alpha_\lambda-\lambda_0) \text{ in } \Omega
\end{equation*} 
and 
\begin{equation*}
	\frac{\mu_0}{\mu}(\mu - \mu_0) \geq c\frac{\mu_0}{\beta_\mu} \text{ in } D,
\end{equation*}
combined with $B\subseteq D$ and Lemmas~\ref{lem_monotonicity_ineq1} and~\ref{lem_Frechet}: 
\begin{align*}
	\inner{(\DhLambda_B^+-\Lambda+\Lambda_0)g,g} &\geq \int_{\Omega} \frac{\lambda_0}{\lambda}(\lambda - \lambda_0)\abs{\nabla\cdot u_0}^2\,\di x + \int_D 2\frac{\mu_0}{\mu}(\mu - \mu_0)\abs{\widehat{\nabla}u_0}^2\,\di x \\
	&\hphantom{\geq{}} + \frac{\lambda_0}{\alpha_\lambda}(\lambda_0-\alpha_\lambda)\int_\Omega \abs{\nabla\cdot u_0}^2\,\di x - 2c\frac{\mu_0}{\beta_\mu}\int_B\abs{\widehat{\nabla}u_0}^2\,\di x \\
	&\geq 2c\frac{\mu_0}{\beta_\mu}\int_{D\setminus B} \abs{\widehat{\nabla}u_0}^2\,\di x \geq 0.
\end{align*}

Next we prove that $\Lambda-\Lambda_0 \leq \DhLambda^+_B \Rightarrow B\subset D^\bullet$. Assume $B\not\subset D^\bullet$. Since $D^\bullet$ is closed and has connected complement, while $B$ is open and non-empty, we may localize inside $B_0 \Subset B \setminus D^\bullet$ and away from $D$. Hence, there exists an open set $D_1$, with $D\Subset D_1$, satisfying Assumption~\ref{assump_D1D2} together with $D_2 = B_0$. Using Theorem~\ref{thm_loc}, there are $g_j$ and $u_j$ (corresponding to $\lambda_0$, $\mu_0$) such that $u_j$ is divergence-free and satisfies 
\begin{equation*}
	\lim_{j\to\infty}\int_{B_0} \abs{\widehat{\nabla}u_j}^2\,\di x = \infty \quad \text{and}\quad \lim_{j\to\infty}\int_{D} \abs{\widehat{\nabla}u_j}^2\,\di x = 0.
\end{equation*}
Lemmas~\ref{lem_monotonicity_ineq1} and~\ref{lem_Frechet} give
\begin{align*}
	\inner{(\DhLambda_B^+-\Lambda+\Lambda_0)g_j,g_j} &\leq \int_{\Omega} 2(\mu - \mu_0)\abs{\widehat{\nabla}u_j}^2\,\di x - 2c\frac{\mu_0}{\beta_\mu}\int_B\abs{\widehat{\nabla}u_j}^2\,\di x \\
	&\leq 2(\beta_\mu - \mu_0)\int_{D}\abs{\widehat{\nabla}u_j}^2\,\di x - 2c\frac{\mu_0}{\beta_\mu}\int_{B_0}\abs{\widehat{\nabla}u_j}^2\,\di x.
\end{align*}
Thus we conclude that
\begin{equation*}
	\lim_{j\to\infty}\inner{(\DhLambda_B^+-\Lambda+\Lambda_0)g_j,g_j} = -\infty,
\end{equation*}
which means that $\Lambda-\Lambda_0 \not\leq \DhLambda_B^+$.
\hfill $\Box$

\bigskip
\textbf{Proof of Proposition ~\ref{prop:neginc}:}
We first prove that $B\subseteq D \Rightarrow \DhLambda^-_B \leq \Lambda - \Lambda_0$. Let $u_0$ be the solution corresponding to $\lambda_0$, $\mu_0$ and the boundary traction $g$. Since $\mu_0-\mu \geq c$ in $D$ and $B\subseteq D$, Lemmas~\ref{lem_monotonicity_ineq1} and~\ref{lem_Frechet} give 
\begin{align*}
	\inner{(\Lambda-\Lambda_0-\DhLambda_B^-)g,g} &\geq \int_{\Omega} (\lambda_0 - \lambda)\abs{\nabla\cdot u_0}^2\,\di x + \int_D 2(\mu_0 - \mu)\abs{\widehat{\nabla}u_0}^2\,\di x \\
	&\hphantom{\geq{}} + (\beta_\lambda-\lambda_0)\int_\Omega \abs{\nabla\cdot u_0}^2\,\di x - 2c\int_B\abs{\widehat{\nabla}u_0}^2\,\di x \\
	&\geq 2c\int_{D\setminus B} \abs{\widehat{\nabla}u_0}^2\,\di x \geq 0.
\end{align*}

Next we prove that $\DhLambda^-_B \leq \Lambda - \Lambda_0 \Rightarrow B\subset D^\bullet$. Assume $B\not\subset D^\bullet$. Since $D^\bullet$ is closed and has connected complement, while $B$ is open and non-empty, we may localize inside $B_0 \Subset B \setminus D^\bullet$ and away from $D$. Hence, there exists an open set $D_1$, with $D\Subset D_1$, satisfying Assumption~\ref{assump_D1D2} together with $D_2 = B_0$. Using Theorem~\ref{thm_loc}, there are $g_j$ and $u_j$ (corresponding to $\lambda_0$ and $\mu_0$) such that $u_j$ is divergence-free and satisfies
\begin{equation*}
	\lim_{j\to\infty}\int_{B_0} \abs{\widehat{\nabla}u_j}^2\,\di x = \infty \quad \text{and}\quad \lim_{j\to\infty}\int_{D} \abs{\widehat{\nabla}u_j}^2\,\di x = 0.
\end{equation*}
We make use of
\begin{equation*}
	\frac{\mu_0}{\mu}(\mu_0 - \mu) = \frac{\mu_0^2}{\mu}-\mu_0 \leq \frac{\mu_0^2}{\alpha_\mu}-\mu_0 = \frac{\mu_0}{\alpha_\mu}(\mu_0 - \alpha_\mu) \text{ in } \Omega.
\end{equation*} 
Lemmas~\ref{lem_monotonicity_ineq1} and~\ref{lem_Frechet} give
\begin{align*}
	\inner{(\Lambda-\Lambda_0-\DhLambda_B^-)g_j,g_j} &\leq \int_{\Omega} 2\frac{\mu_0}{\mu}(\mu_0 - \mu)\abs{\widehat{\nabla}u_j}^2\,\di x - 2c\int_B\abs{\widehat{\nabla}u_j}^2\,\di x \\
	&\leq 2\frac{\mu_0}{\alpha_\mu}(\mu_0 - \alpha_\mu)\int_{D}\abs{\widehat{\nabla}u_j}^2\,\di x - 2c\int_{B_0}\abs{\widehat{\nabla}u_j}^2\,\di x.
\end{align*}
Thus we conclude that
\begin{equation*}
	\lim_{j\to\infty}\inner{(\Lambda-\Lambda_0-\DhLambda_B^-)g_j,g_j} = -\infty,
\end{equation*}
which means that $\DhLambda_B^- \not\leq \Lambda-\Lambda_0$.
\hfill $\Box$

\subsection*{Acknowledgements}

HG is supported by grant 10.46540/3120-00003B from Independent Research Fund Denmark. NH is supported by the Research Council of Finland (Flagship of Advanced Mathematics for Sensing, Imaging and Modelling grant 359181).
VP was supported by the Research Council of Finland (Flagship of Advanced Mathematics for Sensing, Imaging and Modelling grant 359186)
and by the Emil Aaltonen Foundation.


\begin{thebibliography}{10}

\bibitem{Adams75}
R. A. Adams. 
\newblock {\em Sobolev Spaces}. 
\newblock Academic Press, New York--London, 1975.

\bibitem{Barbone04}
P.~E. Barbone and  N.~H. Gokhale.
\newblock Elastic modulus imaging: on the uniqueness and nonuniqueness of the elastography inverse problem in two dimensions.
\newblock {\em Inverse Problems}, 20:283--296, 2004.

\bibitem{Beretta14a}
E. Beretta, E. Francini, A. Morassi, E. Rosset, and S. Vessella.
\newblock Lipschitz continuous dependence of piecewise constant {L}am\'e coefficients from boundary data: the case of non-flat interfaces.
\newblock {\em Inverse Problems}, 30:125005, 2014.

\bibitem{Beretta14b}
E. Beretta, E. Francini, and S. Vessella.
\newblock Uniqueness and Lipschitz stability for the identification of {L}am\'e parameters from boundary measurements.
\newblock {\em Inverse Probl. Imag.}, 8:611--644, 2014.

\bibitem{Garde2020}
V.~Candiani, J.~Dard\'e, H.~Garde, and N.~Hyv{\"o}nen.
\newblock Monotonicity-based reconstruction of extreme inclusions in electrical impedance tomography.
\newblock {\em SIAM J. Math. Anal.}, 52(6):6234--6259, 2020.

\bibitem{Carstea18}
C.~I. C\^{a}rstea, N. Honda, and G. Nakamura.
\newblock Uniqueness in the inverse boundary value problem for piecewise homogeneous anisotropic elasticity.
\newblock {\em SIAM J. Math. Anal.}, 50:3291--3302, 2018.
  
\bibitem{Ci88}
P. Ciarlet.
\newblock {\em Mathematical Elasticity, Volume 1: Three-dimensional Elasticity}.
\newblock North-Holland, Amsterdam, 1988.

\bibitem{Ciarlet78}
P. Ciarlet,
\newblock {\em The Finite Element Method for Elliptic Problems},
\newblock North-Holland, Amsterdam, 1978.

\bibitem{EGH25}
S. Eberle-Blick, H. Garde, and N. Hyv\"onen.
\newblock Direct reconstruction of general elastic inclusions.
\newblock {\em SIAM J. Math. Anal.}, accepted for publication.

\bibitem{EH21}
S. Eberle and B. Harrach. 
\newblock Shape reconstruction in linear elasticity: Standard and linearized monotonicity method.
\newblock {\em Inverse Problems}, 37:045006, 2021.

\bibitem{EP23}
S. Eberle--Blick and V. Pohjola.
\newblock The monotonicity method for inclusion detection and the time harmonic elastic wave equation.
\newblock \textit{Inverse Problems}, 40:045018, 2024.

\bibitem{EP26}
S. Eberle--Blick and V. Pohjola.
\newblock The linearized monotonicity method for elastic waves and the separation of material parameters.
\newblock \textit{SIAM J. Math. Anal.}, 58:2031-2137, 2026

\bibitem{Eskin02}
G. Eskin and J. Ralston.
\newblock On the inverse boundary value problem for linear isotropic elasticity.
\newblock {\em Inverse Problems}, 18:907--921, 2002.

\bibitem{Garde22}
H. Garde and N. Hyv\"onen. 
\newblock Series reversion in Calder\'on's problem. 
\newblock {\em Math. Comp.}, 91:1925-1953, 2022.

\bibitem{Harrach2008}
B. Gebauer.
\newblock Localized potentials in electrical impedance tomography.
\newblock {\em Inverse Probl. Imag.}, 2:251--269, 2008.

\bibitem{Grisvard85}
P. Grisvard, 
\newblock {\em Elliptic Problems in Nonsmooth Domains}, 
\newblock Pitman Advanced Publishing Program, Boston, 1985.

\bibitem{HPS19a} 
B. Harrach, V. Pohjola, M. Salo,
\newblock Dimension bounds in monotonicity methods for the Helmholtz equation,
\newblock \emph{SIAM J. Math. Anal.}, 51:2995-3019, 2019. 

\bibitem{HPS19b}
B. Harrach, V. Pohjola, and M. Salo.
\newblock Monotonicity and local uniqueness for the Helmholtz equation in a bounded domain.
\newblock \textit{Anal. PDE}, 12:1741--1771, 2019.  

\bibitem{HU13}
B. Harrach and M. Ullrich.
\newblock Monotonicity-based shape reconstruction in electrical impedance tomography,
\newblock \emph{SIAM J. Math. Anal.}, 45:3382--3403, 2013.

\bibitem{Ikehata90}
M. Ikehata.
\newblock Inversion formulas for the linearized problem for an inverse boundary value problem in elastic prospection.
\newblock {\em SIAM J. Appl. Math.}, 50:1635--1644, 1990.

\bibitem{Ikehata06}
M. Ikehata.
\newblock Stroh eigenvalues and identification of discontinuity in an anisotropic elastic material.
\newblock {\em Contemp. Math.}, 408:231--247, 2006.

\bibitem{Ikehata99}
M. Ikehata, G. Nakamura, and K. Tanuma.
\newblock Identification of the shape of the inclusion in the anisotropic elastic body.
\newblock {\em Appl. Anal.}, 72:17--26, 1999.

\bibitem{Kaleem26}
A. Kaleem, C. Gebhardt, and I. Romero.
\newblock  On the pure traction problem of linear elasticity:
A regularized formulation and its robust
approximation.
\newblock {\em Comput. Methods Appl. Mech. Eng.}, 459:119105, 2026.

\bibitem{Imanuvilov11}
O.~Y. Imanuvilov and M. Yamamoto.
\newblock On reconstruction of {L}am{\'e} coefficients from partial {C}auchy data.
\newblock {\em J. Inverse Ill-Posed Probl.}, 19:881--891, 2011.

\bibitem{Lin17}
Y.-H. Lin and G. Nakamura.
\newblock Boundary determination of the {L}am{\'e} moduli for the isotropic elasticity system.
\newblock {\em Inverse Problems}, 33:125004, 2017.

\bibitem{Nakamura99}
G. Nakamura, K. Tanuma, and G. Uhlmann.
\newblock Layer stripping for a transversely isotropic elastic medium.
\newblock {\em {SIAM} J. Appl. Math.}, 59:1879--1891, 1999.

\bibitem{Nakamura93}
G. Nakamura and G. Uhlmann.
\newblock Identification of {L}am{\'e} parameters by boundary measurements.
\newblock {\em Amer. J. Math.}, 115:1161--1187, 1993.

\bibitem{Nakamura94}
G. Nakamura and G. Uhlmann.
\newblock Global uniqueness for an inverse boundary value problem arising in elasticity.
\newblock {\em Invent. Math.}, 118:457--474, 1994.

\bibitem{Nakamura94erratum}
G. Nakamura and G. Uhlmann.
\newblock Erratum: ``Global uniqueness for an inverse boundary value problem arising in elasticity''.
\newblock {\em Invent. Math.}, 152:205--207, 2003.

\bibitem{Nakamura95}
G. Nakamura and G. Uhlmann.
\newblock Inverse problems at the boundary for an elastic medium.
\newblock {\em SIAM J. Math. Anal.}, 26:263--279, 1995.

\bibitem{OSY92}
O. Oleinik, S. Shamaev, and G. Yosifian.
\newblock {\em Mathematical Problems in Elasticity and Homogenization.}
\newblock North-Holland, Amsterdam, 1992.

\bibitem{Tamburrino02}
A. Tamburrino and G. Rubinacci.
\newblock A new non-iterative inversion method for electrical resistance tomography.
\newblock {\em Inverse Problems}, 18:1809--1829, 2002.

\end{thebibliography}
\end{document}